\setlist{itemsep=4pt}
\pgfplotsset{compat=1.15}
\newcommand{\NW}{45}
\newcommand{\NE}{135}
\newcommand{\SE}{225}
\newcommand{\SW}{315}
\newcommand{\dist}{-0.5}
\newcommand{\distlarger}{-0.65}
\newtheorem{theorem}{Theorem}[section]
\theoremstyle{plain}
\newtheorem{thm}[theorem]{Theorem}
\newtheorem{prop}[theorem]{Proposition}
\newtheorem{lem}[theorem]{Lemma}
\newtheorem{conj}[theorem]{Conjecture}
\theoremstyle{definition}
\newtheorem{remark}[theorem]{Remark}
\newcommand{\Q}{\mathbb{Q}}
\newcommand{\Z}{\mathbb{Z}}
\newcommand{\F}{\mathbb{F}}
\renewcommand{\geq}{\geqslant}
\renewcommand{\leq}{\leqslant}
\def\Alphabet{A,B,C,D,E,F,G,H,I,J,K,L,M,N,O,P,Q,R,S,T,U,V,W,X,Y,Z}
\def\alphabet{a,b,c,d,e,f,g,h,i,j,k,l,m,n,o,p,q,r,s,t,u,v,w,x,y,z}
\def\endpiece{xxx}
\def\makeAlphabet[#1]{\expandafter\makeA#1,xxx,}
\def\makealphabet[#1]{\expandafter\makea#1,xxx,}
\def\makeA#1,{\def\temp{#1}\ifx\temp\endpiece\else%
\mkbb{#1}\mkfrak{#1}\mkbf{#1}\mkcal{#1}\mkscr{#1}\mkbs{#1}\expandafter\makeA\fi}%
\def\makea#1,{\def\temp{#1}\ifx\temp\endpiece\else\mkfrak{#1}\mkbf{#1}\mkbs{#1}\expandafter\makea\fi}%
\def\mkbb#1{\expandafter\def\csname bb#1\endcsname{\mathbb{#1}}}
\def\mkfrak#1{\expandafter\def\csname fr#1\endcsname{\mathfrak{#1}}}
\def\mkbf#1{\expandafter\def\csname b#1\endcsname{\mathbf{#1}}}
\def\mkcal#1{\expandafter\def\csname c#1\endcsname{\mathcal{#1}}}
\def\mkscr#1{\expandafter\def\csname s#1\endcsname{\mathscr{#1}}}
\def\mkbs#1{\expandafter\def\csname bs#1\endcsname{{\boldsymbol{#1}}}}
\def\makeop[#1]{\xmakeop#1,xxx,}
\def\mkop#1{\expandafter\def\csname #1\endcsname{{\mathrm{#1}}}} %
\def\xmakeop#1,{\def\temp{#1}\ifx\temp\endpiece\else\mkop{#1}\expandafter\xmakeop\fi}%
\def\makeup[#1]{\xmakeup#1,xxx,}
\def\mkup#1{\expandafter\def\csname #1\endcsname{{\mathrm{#1}\,}}} %
\def\xmakeup#1,{\def\temp{#1}\ifx\temp\endpiece\else\mkup{#1}\expandafter\xmakeup\fi}%
\definecolor{firebrick4}{rgb}{0.84,0.1,0.35}
\definecolor{ududff}{rgb}{0.30196078431372547,0.30196078431372547,1.}
\newcommand{\isom}{\cong}
\title{Computing quadratic points on modular curves $X_0(N)$}
\begin{document}


\author[Adzaga]{Nikola Ad\v zaga}
\address{Nikola Ad\v zaga, Department of Mathematics, Faculty of Civil Engineering, University of Zagreb, Fra Andrije Kačića-Miošića 26, 10 000 Zagreb, Croatia}
\email{\url{nadzaga@grad.hr}}

\author[Keller]{Timo Keller}
\address{Timo Keller, Leibniz Universität Hannover, Institut für Algebra, Zahlentheorie und Diskrete Mathematik, Welfengarten 1, 30167 Hannover, Germany}
\email{\url{keller@math.uni-hannover.de}}
\urladdr{\url{https://www.timo-keller.de}}

\author[Michaud-Jacobs]{Philippe Michaud-Jacobs}
\address{Philippe Michaud-Jacobs, Mathematics Institute, University of Warwick, CV4 7AL, United Kingdom}
\email{\url{p.rodgers@warwick.ac.uk}}
\urladdr{\url{https://warwick.ac.uk/fac/sci/maths/people/staff/michaud/}}

\author[Najman]{Filip Najman}
\address{Filip Najman, Department of Mathematics, Faculty of Science, University of Zagreb, Bijenička cesta 30, 10000 Zagreb, Croatia}
\email{\url{fnajman@math.hr}}
\urladdr{\url{https://web.math.pmf.unizg.hr/~fnajman/}}

\author[Ozman]{Ekin Ozman}
\address{Ekin Ozman, Bogazici University,
Department of Mathematics,
Bebek, Istanbul, 34342, 
Turkey}
\email{\url{ekin.ozman@boun.edu.tr}}

\author[Vukorepa]{Borna Vukorepa}
\address{
  Borna Vukorepa, Department of Mathematics,
  Faculty of Science,
  University of Zagreb,
  Bijenicka cesta 30,
  10000 Zagreb,
  Croatia
}
\email{\url{borna.vukorepa@gmail.com}}

\thanks{T.\@ K.\@ was supported by the Deutsche Forschungsgemeinschaft (DFG), Projektnummer STO 299/18-1, AOBJ: 667349 while working on this article. P.\@ M.\@ is supported by an EPSRC studentship EP/R513374/1 and has previously used the surname Michaud-Rodgers. F.\@ N.\@  and B.\@ V.\@  are supported by QuantiXLie Centre of Excellence, a
  project co-financed by the Croatian Government and European Union
  through the European Regional Development Fund - the Competitiveness
  and Cohesion Operational Programme (Grant KK.01.1.1.01.0004). E.\@ O.\@ is partially supported by TUBITAK Project No 122F413. }
\date{\today}
\subjclass[2020]{11G05, 14G05, 11G18}
\keywords{Modular curves, quadratic points, elliptic curves, symmetric Chabauty, Mordell--Weil sieve, Jacobians}

\begin{abstract}
    In this paper we improve on existing methods to compute quadratic points on modular curves and apply them to successfully find all the quadratic points on all modular curves $X_0(N)$ of genus up to $8$, and genus up to $10$ with $N$ prime, for which they were previously unknown. The values of $N$ we consider are contained in the set
    $$\mathcal{L}=\{58, 68, 74, 76, 80, 85, 97, 98, 100, 103, 107, 109, 113, 121, 127 \}.$$

    We obtain that all the non-cuspidal quadratic points on $X_0(N)$ for $N\in \mathcal{L}$ are CM points, except for one pair of Galois conjugate points on $X_0(103)$ defined over $\Q(\sqrt{2885})$. We also compute the $j$-invariants of the elliptic curves parametrised by these points, and for the CM points determine their geometric endomorphism rings. 
\end{abstract}


\maketitle

\section{Introduction}\label{intro_section}

A fundamental problem in arithmetic geometry, with many consequences in the theory of elliptic curves and to Diophantine equations, is to understand the rational points and low-degree points on modular curves. In two seminal papers, Mazur determined the rational points on the two most well-known families of modular curves: $X_1(N)$ in 1977 \cite{eisenstein} and $X_0(p)$ for prime $p$ in 1978 \cite{mazur:isogenies}. Kenku completed the determination of rational points for all $X_0(N)$ in 1981 (see \cite{kenku:125} and the references therein).

In the decades since, all degree $d$ points on $X_1(N)$ have been determined for $d=2$ by Kamienny, Kenku and Momose \cite{kamienny92, KM88}, for $d=3$ by Derickx, Etropolski, van Hoeij, Morrow, and Zureick-Brown \cite{Deg3Class}, and for $4\leq d\leq 7$ and prime values of $N$ by Derickx, Kamienny, Stein, and Stoll \cite{DKSS}. For general $d$, Merel proved that there exists an explicit constant $C_d$, which depends only on $d$, such that when $N>C_d$  the only points of degree $\leq d$ on $X_1(N)$ are cusps \cite{merel}.

The degree $d$ points on all $X_0(N)$ have not been determined for any $d>1$. One major difficulty is that, as opposed to $X_1(N)$, the modular curve $X_0(N)$ has non-cuspidal quadratic points for infinitely many $N$, coming from elliptic curves with complex multiplication (CM). The existence of these CM points prevents the methods used to determine the degree $d$ points on $X_1(N)$ to be used on $X_0(N)$.

However, if one ignores the CM points, then it is expected that there should be only finitely many $N$ such that $X_0(N)$ has non-cuspidal non-CM points. The following conjecture is widely believed. 

\begin{conj}[Quadratic isogenies conjecture] \label{conj:qic}
There exists an integer $C$ such that if $K$ is a quadratic field and $N>C$ is an integer, then any $P\in X_0(N)(K)$ is either a cusp or a CM-point. 
\end{conj}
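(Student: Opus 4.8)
The plan is to deduce Conjecture~\ref{conj:qic} from a \emph{uniform} bound on the degrees of cyclic isogenies of non-CM elliptic curves over quadratic fields, and then to attack that bound directly. A non-cuspidal, non-CM point $P \in X_0(N)(K)$ with $[K:\Q]=2$ corresponds to a pair $(E,C)$, where $E/K$ is a non-CM elliptic curve and $C \subset E$ is a $\Gal(\Qbar/K)$-stable cyclic subgroup of order $N$; if $\sigma$ generates $\Gal(K/\Q)$ then the conjugate point is $P^\sigma = (E^\sigma, C^\sigma)$. The key structural observation is that $P$ descends to a $\Q$-rational point on an Atkin--Lehner quotient $X_0(N)/w_d$ exactly when $P^\sigma = w_d(P)$, and in that case $E^\sigma \cong E/C_d$, so that $E$ is a $\Q$-curve. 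I would therefore organise the argument around two regimes of behaviour, \textbf{large-index behaviour} (infinitely many quadratic points forced by a low-genus quotient) and \textbf{sporadic points}, showing that both disappear once $N$ exceeds an absolute constant.

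The first step removes all infinite families at once. By Abramovich's linear lower bound, the (geometric) gonality of $X_0(N)$ is at least $\tfrac{7}{800}[\PSL_2(\Z):\overline\Gamma_0(N)]$, so it exceeds $4$ for all $N$ past an explicit $N_0$. A curve of gonality $\geq 5$ is neither hyperelliptic nor bielliptic, and hence, by the theorem of Harris--Silverman (via Faltings applied to the symmetric square $X_0(N)^{(2)}$), carries only finitely many points of degree $\leq 2$ over $\Q$ --- and this finiteness holds simultaneously for every quadratic field $K$. The levels at which $X_0(N)$ is hyperelliptic (Ogg) or bielliptic (Bars) form explicit finite sets, pinning down $N_0$. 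For $N > N_0$ we are thus reduced to \emph{sporadic} (geometrically isolated) non-CM quadratic points, and the conjecture becomes the assertion that no such point exists once $N$ is large.

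To bound the levels supporting sporadic points I would factor $C$ into its $p$-primary parts and split into two tasks: (i) bounding a single large prime-power isogeny $p^k \,\|\, N$, and (ii) excluding the simultaneous occurrence of rational $p$-isogenies for a large set of small primes. For (i) I would develop the quadratic-field analogue of Mazur's method: combine the formal immersion criterion on $X_0(p^k)$ and its Atkin--Lehner quotients with an analysis of the mod-$p$ Galois image, using that non-CM $\Q$-curves are modular (Ribet, Khare--Wintenberger) to run level-lowering and force $p^k$ below a bound independent of $K$. For (ii) I would bound the mod-$\prod_p p$ image of $\Gal(\Qbar/K)$: a non-CM $E/K$ with many simultaneous rational cyclic subgroups would have abnormally small image, which a quadratic analogue of Serre's open-image theorem rules out once the product of the primes is large. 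Assembling (i) and (ii) produces the uniform constant $C$.

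The step I expect to be the decisive obstacle is (i): making Mazur's formal-immersion and Mordell--Weil input work \emph{uniformly over all quadratic fields $K$ at once}. Over $\Q$ this is the content of Mazur's and Kenku's theorems; over quadratic fields the methods of Momose, Kamienny, and Derickx--Kamienny--Stein--Stoll give partial, case-by-case results, but no bound on the isogeny degree valid for every quadratic $K$ simultaneously is presently known, and this is precisely the unconditional gap that keeps the conjecture open. Finally, the exclusion of CM points is not a technical convenience but a genuine necessity: a CM elliptic curve defined over a quadratic field admits cyclic isogenies of arbitrarily large degree (as $N$ ranges over integers split in the CM field), so CM points populate $X_0(N)$ for infinitely many $N$ and violate every bound obtained in the sporadic analysis.
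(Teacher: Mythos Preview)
This statement is a \emph{conjecture}, not a theorem, and the paper does not prove it. Immediately after stating Conjecture~\ref{conj:qic}, the authors write: ``While there are no general bounds as in \Cref{conj:qic}, there has been much more success in classifying all the quadratic points on $X_0(N)$ for fixed values of $N$.'' The paper's contribution is to provide computational \emph{evidence} for the conjecture by determining all quadratic points on $X_0(N)$ for the levels $N \in \mathcal{L}$, not to establish the conjecture itself.

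You have essentially recognised this yourself: your proposal is an honest outline of a strategy rather than a proof, and you correctly isolate the decisive obstruction in step~(i) --- obtaining a Mazur-type bound on cyclic isogeny degrees that is uniform over \emph{all} quadratic fields simultaneously. As you note, the known results (Momose, Kamienny, Derickx--Kamienny--Stein--Stoll, etc.) are either field-by-field or restricted to torsion rather than isogenies, and no such uniform bound is currently available. Your first step (finiteness of quadratic points for large $N$ via Abramovich's gonality bound and Harris--Silverman) is correct and well known, but it reduces the conjecture to exactly the open problem you identify, so the proposal does not constitute a proof. There is nothing to compare against in the paper: the authors do not claim, and do not attempt, a proof of Conjecture~\ref{conj:qic}.
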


 We emphasize that the constant $C$ in this conjecture does not depend on the quadratic field $K$. \Cref{conj:qic} implies the conjecture of Elkies which states that the set of $N$ such that $X^+_0(N)(\Q)$ contains points that are neither CM nor cusps is finite (see \cite{Elkies2004}).

While there are no general bounds as in \Cref{conj:qic}, there has been much more success in classifying all the quadratic points on $X_0(N)$ for fixed values of $N$. Bruin and Najman described all the quadratic points on hyperelliptic $X_0(N)$ such that the Jacobian $J_0(N)$ has rank $0$ over $\Q$ \cite{bruin-najman}.  Ozman and Siksek then determined all the quadratic points on the non-hyperelliptic $X_0(N)$ with $J_0(N)$ having rank $0$ over $\Q$ and with genus $\leq 5$  \cite{ozman-siksek}. Box determined the quadratic points on all the $X_0(N)$ with  genus $\leq 5$ and with $J_0(N)$ having positive rank over $\Q$ \cite{box21}. Najman and Vukorepa described all the quadratic points on the bielliptic $X_0(N)$ (for which this had not already been done in previous work) \cite{najman-vukorepa}. We provide a more detailed overview of all known results in Section \ref{sec:overview}.

In this paper we complete the determination of all the quadratic points on the modular curves $X_0(N)$ of genus up to $8$, and for $X_0(N)$ with $N$ prime of genus up to $10$. The motivation for this is threefold. Firstly, the results (i.e. the tables with explicit quadratic points) are useful as they have applications to Diophantine equations. In order to apply the so-called `Modular Approach' over a given real quadratic field $K$ (see for instance \cite{Freitas-Siksek} for details), one
requires
the irreducibility of the mod $p$ representation
of a Frey elliptic curve defined over $K$. This Frey elliptic curve
often has a $2$ or $3$-isogeny defined over $K$ too. Therefore, if the mod $p$
representation is reducible, then the
Frey curve gives a point on $X_0(2p)(K)$
or $X_0(3p)(K)$. Thus having a complete understanding 
of quadratic points is useful in establishing irreducibility
for small values of $p$. These type of results have been useful for instance in \cite{Freitas-Siksek,Khawaja-Jarvis,Michaud-Jacobs}.

Secondly, another motivation for these computations is to give evidence for \Cref{conj:qic} by showing that as $N$ (and hence the genus of $X_0(N)$) gets larger, the non-cuspidal non-CM quadratic points become rarer. 

Finally, a further motivation was to implement state-of-the-art techniques for determining quadratic points in a general and efficient way. The modular curves we study provide a natural testing ground for this.

Let 
\begin{equation}\label{setL} \mathcal{L} \colonequals \left\{ 58,68,74,76,80,85, 97, 98,100, 103, 107, 109, 113, 121, 127\right\}.\end{equation}
Our main result is the complete determination of the non-cuspidal quadratic points on $X_0(N)$ for $N\in \mathcal{L}$. 
\begin{thm} \label{maintm}
Let $N \in \mathcal{L}$. The finitely many non-cuspidal quadratic points on the curve $X_0(N)$ are displayed in the tables in \Cref{sec:tables}.
\end{thm}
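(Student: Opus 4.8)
The plan is to follow the now-standard pipeline for computing quadratic points on modular curves, adapting and optimizing each step so that it can be pushed through for every $N \in \mathcal{L}$. First I would obtain an explicit model for each $X_0(N)$: typically the canonical embedding in $\PP^{g-1}$ computed from a basis of cusp forms in $S_2(\Gamma_0(N))$ (or, when $X_0(N)$ is hyperelliptic or trigonal, an appropriate plane or weighted model), together with explicit equations for the Atkin--Lehner involutions $w_d$ and for the degeneracy maps to lower-level modular curves $X_0(M)$ for $M \mid N$. I would then compute the rational cusps and a list of known quadratic points: the cusps defined over quadratic fields, the CM points (these can be enumerated from the finitely many orders of class number $\leq 2$ together with the divisibility conditions determining which CM $j$-invariants lift to $X_0(N)$), and any sporadic points pulled back from quotients. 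This produces a conjecturally complete list, which the rest of the argument must certify.

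The core of the proof is a combination of symmetric Chabauty and the Mordell--Weil sieve applied to the symmetric square $X_0(N)^{(2)}$. For this I would first analyze the Mordell--Weil group $J_0(N)(\Q)$: its rank can be read off from $L$-function data (BSD is known in the relevant analytic rank $\leq 1$ range, or one uses Kolyvagin/Gross--Zagier via the modular decomposition into simple isogeny factors), and a finite-index subgroup with explicit generators together with an upper bound on the torsion can be obtained by the usual techniques (reduction mod several primes of good reduction, computing $J_0(N)(\F_\ell)$, and pairing this with Heegner-point or cuspidal-divisor constructions of rational points of infinite order). With generators in hand, one embeds $X_0(N)^{(2)}(\Q) \hookrightarrow J_0(N)(\Q)$ via $\{P,P^\sigma\} \mapsto [P + P^\sigma - D_0]$ for a fixed rational degree-$2$ divisor $D_0$, and runs the Mordell--Weil sieve across a well-chosen set of good primes to show that the image is exactly the set coming from the known points; at primes where the sieve alone does not suffice, the relative symmetric Chabauty condition (that the reductions of the annihilating differentials do not all vanish along the residue disc) rules out extra points in the relevant residue classes. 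When $X_0(N)$ admits a map of degree $2$ to an elliptic curve or to $\PP^1$ (the bielliptic/hyperelliptic cases), some quadratic points arise in infinite families via pullback, and those sub-curves must be peeled off and handled separately by a rank computation on the elliptic quotient before the finite-point analysis applies to the complement.

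The main obstacle I expect is the \emph{positive-rank, non-hyperelliptic} cases where $g$ is as large as $8$ (or $10$ for prime $N$): here $J_0(N)(\Q)$ can have rank close to or exceeding $g$, so the naive symmetric Chabauty inequality $\rk J_0(N)(\Q) \leq g - 2$ may fail globally, and one must instead work with a well-chosen abelian subvariety quotient $A$ of $J_0(N)$ (an isogeny factor, or the quotient by such a factor) on which the rank drops enough for Chabauty to bite, while still retaining enough differentials to separate residue discs — combined with a sieve that is simultaneously compatible with the map to $A$. Getting an $A$ that works for each curve, proving the needed rank bound on $A(\Q)$ unconditionally, and making the linear algebra over $\F_\ell$ for the sieve terminate in reasonable time (the class groups $J_0(N)(\F_\ell)$ become large) is the delicate engineering at the heart of the paper. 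A secondary difficulty is certifying that the CM-point list is complete: one must check, level by level, exactly which CM $j$-invariants of class number one or two actually give rise to $\Q$- or quadratic-rational points on $X_0(N)$ (equivalently, which have the requisite cyclic $N$-isogeny rational over the quadratic field), which is a finite but fiddly case analysis. Once every curve in $\mathcal{L}$ has been dispatched by one of these variants, the known list is proved exhaustive and \Cref{maintm} follows, with the explicit points recorded in the tables of \Cref{sec:tables}.
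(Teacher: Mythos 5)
Your proposal follows the right general pipeline (canonical models, enumeration of CM points, symmetric Chabauty combined with a Mordell--Weil sieve on $X_0(N)^{(2)}$), and it correctly identifies the positive-rank, high-genus levels as the crux. But there is a genuine gap exactly there. Your plan requires explicit generators of a finite-index subgroup of $J_0(N)(\Q)$ in order to run the sieve, and for the hard levels (e.g.\ $N = 97, 109, 113, 127$, where $J_0(N)(\Q)$ has rank $3$ and the genus is as large as $10$) no ``usual technique'' produces such generators: cuspidal divisors only give torsion by Manin--Drinfeld, and Heegner constructions give points on individual isogeny factors rather than a usable generating set for the whole Mordell--Weil group. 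The paper's key move is to avoid this entirely: it chooses an Atkin--Lehner involution $w_d$ with $\rk (1-w_d)J_0(N)(\Q) = 0$, so that $(1-w_d)[Q - D_\infty]$ lands in $J_0(N)(\Q)_{\mathrm{tors}}$, which is explicitly computable (equal to the rational cuspidal divisor class group by Mazur for prime $N$, and verified directly for $N = 74, 85, 121$). The sieve is then run entirely inside the torsion subgroup and needs no infinite-order generators. Your suggestion of passing to an abelian quotient $A$ on which ``the rank drops enough'' gestures at this, but without pinning down the rank-$0$ quotient $(1-w_d)J_0(N)$ the sieve as you describe it cannot be set up for these levels.

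A second gap: you propose to rule out \emph{all} unknown points directly on $X^{(2)}$, but the symmetric Chabauty criterion is guaranteed to fail on residue discs containing pullbacks of rational points of $X_0(N)/w_d$ (given the available annihilating differentials), and the relative criterion does not rescue this in practice. The paper's sieve therefore only concludes that any unknown quadratic point is fixed by $w_d$ or is a pullback from $X_0(N)/w_d$; the remaining work is delegated to determinations of the rational points on $X_0^+(N)$ (quadratic Chabauty results from the literature) and, for $(N,d) = (74,37)$ and $(85,85)$, to separate classical Chabauty computations. Your write-up is missing this reduction step and the corresponding external inputs. (You also do not need the full machinery for all of $\mathcal{L}$: the paper dispatches $58, 68, 76$ by going down to previously classified $X_0(N/2)$, and $80, 98, 100$ by the rank-$0$ sieve alone.)
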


We note that every quadratic point on $X_0(N)$ for $N \in \mathcal{L}$ is either a cusp or CM point, apart from a pair of quadratic points defined over the field $\Q(\sqrt{2885})$ on the curve $X_0(103)$.

 For all $N \in \mathcal{L}$ the genus of $X_0(N)$ is between $6$ and $10$. Together with the previous results of \cite{bruin-najman, ozman-siksek, box21, BPN, najman-vukorepa, balcik, Vux91} this completes the classification of quadratic points on the modular curves $X_0(N)$ of genus up to $8$, and the classification of quadratic points on $X_0(N)$ of genus up to $10$ with $N$ prime.
 
We use three methods to determine the quadratic points on these curves: 
\begin{itemize}
    \item The `going down' method,
    \item The `rank $0$' method,
    \item The `Atkin--Lehner sieve' method.
\end{itemize}
 The `going down' method can be applied in certain cases when the quadratic points on $X_0(M)$, for some proper divisor $M$ of $N$, have been previously determined. We can often reduce the problem to determining the rational points on several Atkin--Lehner quotients of several modular curves, and checking whether finitely many lifts of quadratic points on $X_0(M)$ lift to quadratic points on $X_0(N)$. More details about the `going down' method are given in \Cref{sec:gd}.  The `rank $0$' method, which is explained in more detail in \Cref{sec:rank0}, can be used when the Mordell--Weil group $J_0(N)(\Q)$ has rank $0$. It uses a type of Mordell--Weil sieve to determine the quadratic points on $X_0(N)$. The `Atkin--Lehner sieve' method is the most involved of the three methods. It uses a Mordell--Weil sieve involving an Atkin--Lehner involution on the curve $X_0(N)$, together with a symmetric Chabauty criterion. This method is based on ideas of Siksek \cite{siksek:symchabauty}, which were later further developed in \cite{box21, najman-vukorepa}. A successful application of the sieve reduces the problem to considering fixed points of an Atkin--Lehner involution and the rational points on a given Atkin--Lehner quotient. A more detailed description of this method can be found in \Cref{sec:sieve}.

Although the methods we use are similar to previously used methods, a major reason why we are able to extend these methods to consider curves of higher genus than were previously studied is thanks to the new techniques we introduce (in Section \ref{models_section}) to work with models and maps of curves $X_0(N)$ and their quotients. In particular, we work with models of $X_0(N)$ on which the action of the Atkin--Lehner involutions is simultaneously diagonalised, allowing us to easily compute quotient maps and see the relationships between quadratic points.  We also improve on existing methods for computing equations for the $j$-map, as well as introduce a fast method for testing nonsingularity at a given prime. We also note that our algorithms offer significant computational speed-ups when applied to certain previously computed levels (see Remark \ref{running_time_rem}).

The \texttt{Magma} \cite{magma} code used to support and verify the computations in this paper is available at \begin{center}
\url{https://github.com/TimoKellerMath/QuadraticPoints}
\end{center}

\subsection*{Acknowledgements} We would like to thank the organisers of the March 2022 \emph{Modular curves workshop} at MIT (hybrid) for brining the authors together and providing a starting point for this project. We would also like to thank Jennifer Balakrishnan, Shiva Chidambaram, Pip Goodman, David Holmes, Jeremy Rouse, Samir Siksek, Michael Stoll, and Damiano Testa for useful discussions. We thank the anonymous referee for helpful comments.

\section{Equations for \texorpdfstring{$X_0(N)$}{X0N} and associated maps}\label{models_section}

One of the main barriers to extending the work of classifying quadratic points on the curves $X_0(N)$ arises from the computational difficulties of working with curves of high genus. In order to use each of our three methods to compute quadratic points, it will be imperative to work with suitable models of the curves $X_0(N)$ and its quotients. In this section, we describe how to obtain such models, maps to Atkin--Lehner quotients, and equations for the $j$-map. We also describe a method for verifying whether a prime is of good or bad reduction for a given model.

The methods and techniques we present in this section are those required for the computations in this paper, but only make up a subset of the totality of the code that is available in the accompanying \texttt{Magma} files. The code also contains functions that can be used to compute maps to curves $X_0(M)$ with $M \mid N$, modular parametrisation maps, quotients by subgroups of the Atkin--Lehner group, and more.

\subsection{Models of $X_0(N)$}

Let $N$ be such that $X_0(N)$ is non-hyperelliptic of genus $g \geq 2$. A smooth model for the curve $X_0(N)$ in $\mathbb{P}^{g-1}$ may be obtained as the image of the canonical embedding on a basis of cusp forms in $S_2(N)$. This (now standard) process for obtaining models is described in detail in  \cite[pp.~17--38]{galbraith}, and the \texttt{Magma} code we used to do this is adapted from \cite{ozman-siksek}.

Although any basis of cusp forms for $S_2(N)$ may be used to obtain a model of $X_0(N)$, choosing certain bases can produce better models for our purposes. We work with \emph{diagonalised bases} of cusp forms, which give rise to \emph{diagonalised models} for $X_0(N)$. We now describe what these are. For $ M \mid N$ such that $\gcd(M,N/M)=1$, we denote by $w_M$ the corresponding Atkin--Lehner involution. The set of all Atkin--Lehner involutions forms an abelian $2$-group, which we denote $W$. We fix a basis $\{h_1, \dots, h_g\}$ for $S_2(N)$ and we may then
represent the elements of $W$ as matrices acting on $S_2(N)$ with respect to this basis. The set of all matrices in $W$ is simultaneously diagonalisable, meaning there exists a matrix $T$ such that $T w T^{-1}$ is a diagonal matrix for each $w \in W$. Applying this change of basis matrix to the basis $\{h_1, \dots, h_g\}$ produces a new basis $\{f_1, \dots, f_g\}$ of $S_2(N)$, for which the Atkin--Lehner involutions now act diagonally. That is, for each $w \in W$ and $1 \leq i \leq g$, we have $w(f_i) = \delta_{w,i} f_i$, where $\delta_{w,i} = \pm 1$.

After obtaining a diagonalised basis, we may then obtain a model for $X_0(N)$ in $\mathbb{P}^{g-1}_{x_1, \dots, x_g}$ via the image of the canonical embedding. The Atkin--Lehner involutions on  this model then act as they do on the $f_i$, namely $w(x_1 : \dots : x_g) = (\delta_{w,1}x_1: \dots : \delta_{w,g}x_g)$. 

There are three main reasons for working with diagonalised models. First and foremost, we will make use of the maps from $X_0(N)$ to its Atkin--Lehner quotients, and these maps will be straightforward to compute and work with, as we see in the following subsection. Secondly, we found that working with a diagonalised basis allowed us to quickly compute the image of the canonical embedding and produce equations for $X_0(N)$ with small coefficients. Finally, we found that the diagonalised models we produced usually had good reduction at small primes not dividing $2N$, which is important in the application of the sieving methods we use. We note that a diagonalised model of a non-split Cartan curve is used in \cite[pp.~254--255]{MJ_cart}, and that diagonalised bases of cusp forms for $N = p$ prime are used in \cite{najman-vukorepa}. In each of these cases, there is a single involution.

\subsection{Maps to Atkin--Lehner quotients}

Let $\{f_1, \dots, f_g\}$ be a diagonalised basis for $S_2(N)$ and assume that we have obtained the corresponding diagonalised model for $X_0(N)$ in $\mathbb{P}^{g-1}_{x_1, \dots, x_g}$ on which each Atkin--Lehner involution $w \in W$ acts as $w(x_i) = \delta_{w,i}x_i$, with  $\delta_{w,i} = \pm 1$. For each $w \in W$, we wish to compute a model for the quotient curve $X_0(N) / w$, as well as a map from $X_0(N)$ to this quotient. For each $N$ and $w$ we considered, the genus of $X_0(N) / w$ was $\geq 2$.

\subsubsection*{Case 1: The quotient $X_0(N) / w$ is non-hyperelliptic.} In this case, since the genus of $X_0(N) / w$ is $\geq 3$, the curve $X_0(N) / w$ may be canonically embedded. We choose the indices $i$ for which  $w(f_i) = f_i$, which we denote by $i_1, \dots, i_t$, and compute the image of the canonical embedding on the cusp forms $\{f_{i_1}, \dots, f_{i_t}\}$. This gives a model for $X_0(N) / w$ in $\mathbb{P}^{t-1}$, and the map from $X_0(N)$ to $X_0(N) / w$ is simply given by the projection map $(x_1 : \dots : x_g) \mapsto (x_{i_1} : \dots : x_{i_t})$.

\subsubsection*{Case 2: The quotient $X_0(N) / w$ is hyperelliptic.} In this case the curve $X_0(N) / w$ cannot be canonically embedded. Instead, we choose a set of indices $1 \leq i_1, \dots, i_t \leq g$ for which either $\delta_{w,i_j} = 1$ for $1 \leq j \leq t$, or  $\delta_{w,i_j} = -1$ for $1 \leq j \leq t$. In this way, the coordinates $x_{i_1}, \dots, x_{i_t}$ form a subset of coordinates on which $w$ acts trivially in projective space. We then project onto these coordinates and consider the image of this projection map. If this image is a curve of the expected genus (or if the projection map is of degree $2$ onto its image), then we have obtained a projective model for $X_0(N) / w$ in $\mathbb{P}^{t-1}$. If this is not the case, then we have in fact obtained a model for a (non-trivial) quotient of $X_0(N) / w$, in which case we can try to repeat this process with a different set of indices $1 \leq i_1, \dots, i_t \leq g$.  If we succeed in obtaining a model for $X_0(N) / w$, we may then apply a transformation to take this model to a standard model for this hyperelliptic curve in weighted projective space. We note that choosing a larger set of admissible indices $\{i_1, \dots, i_t\}$ increases the likelihood of success, but also gives a more complicated quotient map to the standard hyperelliptic model. 

The method presented in Case 2 succeeded for all values of $N$ and Atkin--Lehner quotients we considered. However, if no suitable set of indices can be found, then it is also possible to use \texttt{Magma}'s inbuilt \texttt{CurveQuotient} function (which we found to be slower and produced more complicated maps in the cases we tested).

\subsection{Equations for the $j$-map}

Assume we have a model for $X_0(N)$ obtained as the image of the canonical embedding on the cusp forms $\{f_1, \dots, f_g\} \in S_2(N)$. We describe how to obtain equations for the $j$-map, $j\colon X_0(N) \rightarrow X_0(1) \isom \mathbb{P}^1$. Obtaining these equations will allow us to compute the cusps on the models of our curves $X_0(N)$, compute the $j$-values of any quadratic points we compute, and finally compute quadratic points with a given $j$-invariant obtained using the `going down' method. The methods we present here are in fact applicable in a wider setting. In particular, they can be used to compute modular parametrisation maps, as well as maps to modular curves $X_0(M)$ with $M \mid N$.

We start by following the method outlined in \cite[pp.~2464--2466]{ozman-siksek}. As a modular function, we have the $q$-expansion \[j(q) = \frac{1}{q} + 744 + 196884q + 21493760q^2 + \cdots.\]  Using linear algebra, we compute homogeneous polynomials $F, G \in \Q[X_1, \dots, X_g]$ of the same degree $r$, such that \begin{equation}\label{eqs_j} \frac{F(f_1(q), \dots, f_g(q))}{G(f_1(q), \dots, f_g(q))} = j(q)\end{equation} up to some precision, say $O(q^{5N})$. We then need to verify that this equality holds up to arbitrary precision. At this point, one would usually attempt to verify this by checking the equality up to the Sturm bound, but since $j(q)$ is not a modular form (i.e. not holomorphic), this is not possible. 

One option is to use the fact that $j(q) = E_4(q)^3 / \Delta(q)$ and consider $F\Delta - GE_4^3$, which is a modular form of weight $12 + \deg(F)$. The Sturm bound will be quite large in this case, and this method will not generalise. Instead, applying the argument in \cite[p.~2466]{ozman-siksek}, it is enough to show that the equality (\ref{eqs_j}) holds up to precision $O(q^{d_{F/G}+d_j + 1})$. Here, $d_{F/G}$ and $d_j$ are the degrees of $F/G$ and $j$ respectively as rational functions on $X_0(N)$. The degree $d_j$ can be computed solely in terms of $N$. In \cite[p.~2466]{ozman-siksek}, the degree $d_{F/G}$ is explicitly computed, and once it has been verified that $d_{F/G} = d_j$, it is enough to check the equality (\ref{eqs_j}) up to precision $O(q^{2d_j+1})$. However, computing $d_{F/G}$ is slow, and computationally infeasible for curves of high genus that we need to work with. Instead, we will find an upper bound on $d_{F/G}$.

\begin{lem}
Let $F/G$ be a rational function on a canonically embedded curve $X \subset \mathbb{P}^{g-1}$, with $F$ and $G$ homogeneous polynomials of degree $r$. Then \[ \deg(F/G) \leq (2g-2) \cdot r. \]
\end{lem}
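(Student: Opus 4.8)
The plan is to bound the degree of $F/G$ as a rational function on $X$ by relating it to the degree of the divisor of poles of $F/G$, which in turn is controlled by the hyperplane class on the canonically embedded curve. Recall that for a nonconstant rational function $\phi$ on a smooth projective curve $X$, the degree $\deg(\phi)$ equals the degree of the pole divisor $(\phi)_\infty$ (equivalently, of the zero divisor), and this is at most the degree of any effective divisor of which $(\phi)_\infty$ is a sub-divisor.

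First I would observe that, since $X \subset \mathbb{P}^{g-1}$ is canonically embedded, a hyperplane section of $X$ is a canonical divisor, so it has degree $2g-2$. Consequently, for any homogeneous polynomial $H$ of degree $r$ that does not vanish identically on $X$, the divisor of zeros of $H$ on $X$ (i.e.\ the intersection $X \cap \{H = 0\}$, counted with multiplicity) is a hyperplane-section-type divisor of degree $(2g-2)\cdot r$; concretely, if $\ell$ is any linear form not vanishing on $X$, then $H/\ell^r$ is a rational function whose zero divisor is effective of degree $(2g-2)r$. Then I would write $F/G = (F/\ell^r)/(G/\ell^r)$, so that the poles of $F/G$ are contained among the zeros of $G$ on $X$ together with (a sub-locus of) the zeros of $\ell^r$; more cleanly, the pole divisor $(F/G)_\infty$ is bounded above by the zero divisor of $G$ on $X$, which has degree exactly $(2g-2)r$. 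Hence $\deg(F/G) = \deg\big((F/G)_\infty\big) \leq (2g-2)\cdot r$.

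The only genuine subtlety is making precise the claim that $\deg\big(X \cap \{G=0\}\big) = (2g-2)r$ and that the pole divisor of $F/G$ is dominated by this. This is where one must be slightly careful: if $X$ is not projectively normal, or if $G$ vanishes on $X$ to higher order than naively expected along some component of the intersection, one should phrase things in terms of the line bundle $\mathcal{O}_X(1) \cong \omega_X$ (the restriction of $\mathcal{O}_{\mathbb{P}^{g-1}}(1)$) and note that $H^0$-sections of $\mathcal{O}_X(r) \cong \omega_X^{\otimes r}$ have zero divisors of degree $r \deg \omega_X = r(2g-2)$. Since $F$ and $G$ both give sections of $\omega_X^{\otimes r}$, the divisor of $F/G$ is the difference $\mathrm{div}(F) - \mathrm{div}(G)$ of two effective divisors each of degree $(2g-2)r$, and cancellation of common components only decreases the degree; thus $\deg(F/G) \leq (2g-2)r$. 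I expect this identification of $\mathcal{O}_X(1)$ with the canonical bundle (valid precisely because $X$ is canonically embedded) to be the key point, and the rest is the standard dictionary between divisors of sections and degrees of maps.
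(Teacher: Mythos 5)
Your proposal is correct and follows essentially the same route as the paper: identify $\deg(F/G)$ with the degree of its zero (or pole) divisor, bound that by the divisor cut out on $X$ by one of the degree-$r$ forms, and compute the latter's degree as $\deg(X)\cdot r=(2g-2)r$ using that a canonically embedded curve has degree $2g-2$. The only cosmetic differences are that the paper bounds via $\mathrm{div}(F)$ and invokes B\'ezout's theorem, whereas you bound via $\mathrm{div}(G)$ and phrase the degree count through sections of $\omega_X^{\otimes r}\cong\mathcal{O}_X(r)$ — the same computation in different language.
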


\begin{proof}
We have that \[ \deg(F/G) = \deg(\mathrm{div}_0(F/G)) \leq \deg(\mathrm{div}(F)), \] where $\mathrm{div(F)}$ and $\mathrm{div}_0(F)$ denote the divisor of $F$ and the divisor of zeros of $F$ respectively. Then \[ \mathrm{div}(F) = \mathrm{div}( X \cap \{F = 0\}). \] This is the intersection of a curve and a hypersurface. By B\'ezout's theorem (as stated in \cite[pp.~167--168]{shafarevich} for example), the degree of this intersection divisor is $\deg(X) \cdot  \deg(\{F = 0 \}) = (2g-2) \cdot r$. 
\end{proof}

Thanks to this lemma and using the formula $d_j = \prod_{p \mid N}\big(1+\frac{1}{p}\big)$, it suffices to check (\ref{eqs_j}) up to precision $O(q^m)$, where \[ m = \left((2g-2) \cdot r \right) + 1 + N \cdot \prod_{p \mid N}\Big(1+\frac{1}{p}\Big).   \]  

The usual way of finding suitable homogeneous polynomials $F$ and $G$ of the same degree $r$ is to test values $r = 2, 3, 4, \dots$ until a degree that works is reached. Since $r$ can be somewhat large, this can be very slow. In order to start at a suitable value of $r$, we use the following lemma, communicated to us by Jeremy Rouse. We note that this lemma aids us in choosing a value of $r$ that will work from the outset, but its correctness is in fact irrelevant for our computations.

\begin{lem}
Let $X$ be a curve of genus $g \geq 2$ and let $\varphi\colon X \rightarrow \mathbb{P}^1$ be a map of degree $d$. Define $r$ to be the smallest positive integer larger than $\frac{d}{2(g-1)} + \frac{1}{2}$. Then $\varphi$ may be expressed as a ratio of two holomorphic differential $r$-forms on $X$.
\end{lem}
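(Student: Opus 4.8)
The plan is to realise $\varphi$ as a ratio of two sections of the line bundle $\mathcal{O}(rK_X)$, where $K_X$ is the canonical divisor, and to ensure that this is possible by a Riemann--Roch dimension count. First I would fix a point $\infty \in \mathbb{P}^1$ (say over which $\varphi$ is unramified and lies in general position) and set $D = \varphi^{*}(\infty)$, an effective divisor of degree $d$ on $X$. Then $\varphi$ itself, viewed as a rational function $X \to \mathbb{P}^1$, lies in the linear system associated to $D$: more precisely $1$ and $\varphi$ both lie in $L(D) = H^0(X, \mathcal{O}(D))$, so $\varphi = g/h$ with $g, h \in L(D)$ and $\mathrm{div}(h) + D \geq 0$. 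The idea is then to multiply numerator and denominator by a common section of a high power of the canonical bundle to turn both into holomorphic $r$-forms.

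The key step is the following. A holomorphic differential $r$-form is a global section of $\mathcal{O}(rK_X)$, which has degree $r(2g-2)$. I want to exhibit an effective divisor $E$ with $E \sim rK_X - D$; then multiplying $g, h \in L(D)$ by the corresponding function produces two elements of $L(rK_X) = H^0(X, \Omega_X^{\otimes r})$ whose ratio is still $\varphi$. Such an $E$ exists precisely when $L(rK_X - D) \neq 0$. By Riemann--Roch, $\ell(rK_X - D) \geq \deg(rK_X - D) + 1 - g = r(2g-2) - d + 1 - g$, which is positive as soon as $r(2g-2) > d + g - 2$, i.e. $r > \frac{d + g - 2}{2g - 2} = \frac{d}{2(g-1)} + \frac{1}{2}$. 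This is exactly the definition of $r$ in the statement, so for this $r$ we have $\ell(rK_X - D) \geq 1$, giving the required effective $E$ and hence the expression $\varphi = (g \cdot s)/(h \cdot s)$ with $g s, h s$ holomorphic $r$-forms, where $s$ is a function with $\mathrm{div}(s) = E - (rK_X - D)$ in the appropriate sense (more carefully: pick $0 \neq s \in L(rK_X - D)$, so $\mathrm{div}(s) + rK_X - D \geq 0$; then $g s$ has divisor $\mathrm{div}(g) + \mathrm{div}(s) \geq -D - (rK_X - D) = -rK_X$, wait — sign bookkeeping below).

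Let me record the divisor bookkeeping cleanly to avoid sign errors. Choose $g, h \in L(D)$ with $\varphi = g/h$; so $\mathrm{div}(g) \geq -D$ and $\mathrm{div}(h) \geq -D$. Choose $0 \neq s$ in $H^0(X, \mathcal{O}(rK_X - D))$, which is nonzero by the Riemann--Roch bound above; fixing an effective canonical divisor $K$ in the class $K_X$, this means $s$ is a rational function with $\mathrm{div}(s) + rK - D \geq 0$. Then $gs$ and $hs$ are rational functions with $\mathrm{div}(gs) = \mathrm{div}(g) + \mathrm{div}(s) \geq -D + (D - rK) = -rK$, and likewise $\mathrm{div}(hs) \geq -rK$; hence $gs, hs \in L(rK) = H^0(X, \Omega_X^{\otimes r})$, i.e. each corresponds to a holomorphic $r$-form, and their ratio is $gs/hs = g/h = \varphi$. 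The main obstacle is purely the cleanliness of this linear-algebra/divisor bookkeeping (and, if one wants, the mild genericity needed to pick a convenient $\infty$, though it is not truly needed since any $D$ of degree $d$ in the fibre class works). As the authors themselves remark, the precise correctness of the bound is irrelevant for the application — it merely gives a good starting value of $r$ — so one can afford to be slightly generous. One could equivalently phrase the whole argument intrinsically: $\varphi \in H^0(X,\mathcal{O}(D))/\text{(scalars)}$ and tensoring the surjection $H^0(X,\mathcal{O}(rK_X-D)) \otimes H^0(X,\mathcal{O}(D)) \to H^0(X,\mathcal{O}(rK_X))$ (nonzero on the relevant elements) realises $\varphi$ as a ratio in $H^0(X, \Omega_X^{\otimes r})$.
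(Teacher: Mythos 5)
Your argument is essentially the paper's proof: both reduce the claim to showing $\ell(rK-D)>0$, where $D$ is the degree-$d$ polar divisor of $\varphi$ (your $\varphi^{*}(\infty)$), via the Riemann--Roch estimate $\ell(rK-D)\geq r(2g-2)-d-g+1$, and then multiply $\varphi$ by the corresponding holomorphic $r$-form. The only blemish is an arithmetic typo in the intermediate step: positivity requires $r(2g-2)>d+g-1$, not $d+g-2$; your final threshold $\frac{d}{2(g-1)}+\frac{1}{2}$ is nevertheless the correct one and agrees with the paper.
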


In our set-up, any $r$-fold product of the cusp forms $f_i$ (which is a modular form of weight $2r$) corresponds to a holomorphic differential $r$-form  on $X_0(N)$, and so we may seek polynomials $F$ and $G$ of degree $r$, as defined in this lemma.

\begin{proof}   Write $\mathrm{div}(\varphi) = D_0 - D$, where $D_0$ and $D$ are effective divisors of degree $d$ (the divisors of zeros and poles of $\varphi$ respectively). We aim to show that there exists a holomorphic differential $r$-form, $h$, such that $f = \varphi h$ is also a holomorphic differential $r$-form. Then $\varphi = f/h$  as required. It will suffice to find a holomorphic form $h$ such that $\mathrm{div}(h) \geq D$. We define (for any divisor $D'$ on $X$), \[ \mathcal{L}^{(r)}(D') = \{ \text{meromorphic differential $r$-forms, } \mu~ |~ \mathrm{div}(\mu) \geq -D'\}. \] It will be enough to prove that $\dim(\mathcal{L}^{(r)}(-D)) > 0$. Let $K$ be a canonical divisor on $X_0(N)$. Then by \cite[p. 238]{miranda}, we have an isomorphism of vector spaces $ \mathcal{L}^{(r)}(-D) \cong \mathcal{L}(-D+rK). $ By Riemann--Roch, \begin{align*} \dim(\mathcal{L}(-D+rK)) &\geq \deg(-D+rK) - g + 1  \\ &  = -d + r(2g-2) -g +1. 
 \end{align*} This is positive if and only if $r > \frac{g + d -1}{2(g-1)}  = \frac{d}{2(g-1)} + \frac{1}{2}$, as required.
\end{proof}

We note that the value of $r$ obtained by applying this lemma is not necessarily the minimal $r$ one can choose. However, in the cases we tested, we found it was usually the minimum possible value, or very close to it.

\subsection{Testing nonsingularity}

In order to apply the `rank $0$' and `Atkin--Lehner sieve' methods we use for computing quadratic points, it is crucial to be able to verify whether or not, for a given prime $p$, the reduction of a model for $X_0(N)$ mod $p$ is singular. If the model is singular mod $p$, then \texttt{Magma}'s inbuilt \texttt{IsSingular} command will determine this right away. However, if the model is nonsingular mod $p$, then for curves of genus $ \geq 9$ we found it was very slow to check this directly. Instead we use the following lemma to verify nonsingularity. The idea of using this method was suggested to us by David Holmes.

\begin{lem}\label{nonsing} Let $X$ be a nonsingular projective model for a geometrically irreducible curve $Y$ over $\Q$ and let $p$ be a prime. Denote by $\tilde{X}$ the reduction of $X$ mod $p$ and suppose that $\tilde{X}$ is an integral (i.e. reduced and irreducible) curve. If $\tilde{X}$ has a nonsingular $\mathbb{F}_p$-point and the (geometric) genus of $\tilde{X}$ equals the genus of $Y$, then $X$ has good reduction at $p$.
\end{lem}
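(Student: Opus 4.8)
The plan is to exploit the standard fact that a nonsingular projective curve over $\Q$ with integral reduction can only acquire "bad reduction" by a drop in arithmetic genus of the special fibre, and that for an integral curve arithmetic genus equals geometric genus precisely when the curve is smooth. First I would set up the relevant integral model: take $\mathcal{X} \to \Spec \Z_{(p)}$ to be the scheme cut out by the given (homogeneous) equations for $X$, base-changed to $\Z_{(p)}$. The generic fibre is $X_{\Q}$, which is smooth and geometrically irreducible (being isomorphic to the smooth projective curve $Y$), and by hypothesis the special fibre $\tilde{X} = \mathcal{X} \times_{\Z_{(p)}} \F_p$ is integral. Flatness of $\mathcal{X}$ over $\Z_{(p)}$ follows because $\Z_{(p)}$ is a discrete valuation ring and $\mathcal{X}$ is $\Z_{(p)}$-torsion-free (it is defined by homogeneous equations with no component supported on the special fibre, as the special fibre has the expected dimension $1$).

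Next I would invoke the invariance of the arithmetic genus (equivalently, of $h^0(\mathcal{O})$ and $h^1(\mathcal{O})$, via the constancy of Euler characteristic on flat proper families together with $h^0 = 1$ on both fibres since both are integral and projective over a field): the arithmetic genus $p_a(X_{\Q})$ equals $p_a(\tilde{X})$. Since $X_{\Q} \cong Y$ is smooth, $p_a(X_{\Q}) = g(Y)$, the genus of $Y$. Now use the hypothesis that the geometric genus of $\tilde{X}$ equals $g(Y)$. Combining, $p_a(\tilde{X})$ equals the geometric genus of $\tilde{X}$. For an integral projective curve over $\F_p$, the difference $p_a - p_g = \sum_{x} \dim_{\F_p}(\widetilde{\mathcal{O}}_x / \mathcal{O}_x) \geq 0$, where the sum runs over closed points and $\widetilde{\mathcal{O}}_x$ denotes the integral closure in the function field; this quantity vanishes if and only if every local ring is integrally closed, i.e. $\tilde{X}$ is normal, i.e. (for a curve) smooth over $\F_p$. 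Hence $\tilde{X}$ is smooth over $\F_p$; together with the hypothesis that $\tilde{X}$ has a nonsingular (hence, combined with smoothness of the whole fibre, this is automatic) $\F_p$-point — which guarantees $\tilde{X}$ is geometrically reduced and that the model is nonempty mod $p$ — we conclude $\mathcal{X} \to \Spec \Z_{(p)}$ is smooth with geometrically connected fibres, i.e. $X$ has good reduction at $p$.

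The main obstacle I anticipate is justifying the equality of arithmetic genera cleanly, because it rests on flatness of the naive model $\mathcal{X}$ over $\Z_{(p)}$, and flatness for a scheme given by explicit equations is exactly the kind of thing one wants to avoid verifying by hand. The clean way around this is: since $\Z_{(p)}$ is a PID and $\mathcal{X}$ is the closed subscheme of $\PP^{g-1}_{\Z_{(p)}}$ defined by the ideal generated by the integral defining polynomials, flatness is equivalent to having no associated prime of $\mathcal{O}_{\mathcal{X}}$ equal to $(p)$; and this holds because $\tilde{X}$ being an integral curve (dimension $1$) together with $X_{\Q}$ having dimension $1$ forces the special fibre to be a Cartier divisor cut out by a nonzerodivisor, so no embedded or whole component lies over $p$. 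I would state this as a short remark and cite a standard reference (e.g. Liu, \emph{Algebraic Geometry and Arithmetic Curves}, or Hartshorne III for semicontinuity and the genus computation) rather than spell it out. The role of the nonsingular $\F_p$-point hypothesis is mild but worth flagging: it rules out the pathological case where $\tilde{X}$ is smooth but not geometrically connected over $\F_p$ only if one worries about that — in fact for a model coming from a canonical embedding geometric irreducibility is typically already known, so in practice this hypothesis mainly certifies that the reduction behaves as expected and gives a concrete check to feed into \texttt{Magma}.
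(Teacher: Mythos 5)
Your proposal is correct and follows essentially the same route as the paper: equality of arithmetic genera across the (flat) reduction, and then $p_a=p_g$ for the integral special fibre forces the normalisation map to be an isomorphism, hence nonsingularity (the paper cites Stacks Lemma 0CE4 for exactly this last step). One small correction: an integral projective curve over $\F_p$ need not have $h^0(\mathcal{O})=1$ (its ring of global sections can be a nontrivial finite extension of $\F_p$), so the nonsingular $\F_p$-point hypothesis is less dispensable than you suggest --- it is what forces $H^0(\tilde{X},\mathcal{O})=\F_p$ and geometric integrality, which your Euler-characteristic comparison and the $p_a-p_g=\sum_x\dim(\widetilde{\mathcal{O}}_x/\mathcal{O}_x)$ formula both require.
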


\begin{proof} Since $\tilde{X}$ is integral and has a nonsingular $\mathbb{F}_p$ point, it is geometrically integral. The arithmetic genus of $\tilde{X}$ matches the arithmetic genus of $X$, which is the genus of $Y$ (since $X$ is a nonsingular model for $Y$). It follows that the arithmetic genus and geometric genus of $\tilde{X}$ are equal. We now apply \cite[Lemma 0CE4]{stacks-project} to the normalisation of $\tilde{X}$ to conclude that $\tilde{X}$ is nonsingular.  
\end{proof}

We note that verifying that the (geometric) genus of $\tilde{X}$ equals the genus of $X_0(N)$ is a fast computation, since \texttt{Magma} works with the function field of the curve to do this and there is a formula for the genus of $X_0(N)$.

\section{Computing quadratic points}\label{comp_section}

In this section we start by providing an overview of the known results on quadratic points on $X_0(N)$. We then introduce the three methods we use to study quadratic points on $X_0(N)$, and apply them to prove Theorem~\ref{maintm}. 

\subsection{Overview of previously studied $X_0(N)$ and methods}\label{sec:overview}

We start by providing an overview of the known results on quadratic points on $X_0(N)$. We say a point $Q$ is a \emph{quadratic point} on $X_0(N)$ if $Q \in X_0(N)(K) \setminus X_0(N)(\Q)$ for a quadratic field $K$. We will usually consider a quadratic point together with its Galois conjugate, $Q^\sigma$, where $\sigma$ denotes the non-trivial element of $\mathrm{Gal}(K/ \Q)$. A pair of quadratic points gives rise to a rational point on the symmetric square of $X_0(N)$, which we write as an effective degree $2$ divisor $Q + Q^\sigma \in X_0(N)^{(2)}(\Q)$. 

We recall that a smooth projective curve $X/\Q$ of genus $\geq 2$ has infinitely many quadratic points (as we range over all quadratic fields) if and only if it is hyperelliptic or if it is bielliptic with a degree 2 morphism $X \to E$ where $E/\Q$ is an elliptic curve of positive rank over $\Q$  ~\cite{HarrisSilverman}. In these cases, when we say that the quadratic points have been classified, we mean that any quadratic points not arising as part of an infinite (geometric) family have been computed. 

So far, thanks to the work of numerous people across many papers,
the quadratic points have been classified on many modular curves $X_0(N)$ with genus $g(X_0(N)) \geq 2$. We list the results in chronological order:

\begin{enumerate}[label=(\roman*)]
    \item The hyperelliptic $X_0(N)$ with $\rk J_0(N)(\Q)=0$, see \cite{bruin-najman}: This occurs for \[ N \in \{22,23,26,28,29,30,31,33,35,39, 40,41, 46, 47,48,50,59,71 \}.\]
    \item The non-hyperelliptic $X_0(N)$ with $g(X_0(N))\leq 5$ and $\rk J_0(N)(\Q)=0$, see \cite{ozman-siksek}. This occurs for
    \[N \in \{34, 38, 42, 44, 45, 51, 52, 54, 55, 56, 63, 64, 72, 75, 81\}.\]
    \item The $X_0(N)$ with $g(X_0(N))\leq 5$ and $\rk J_0(N)(\Q)>0$, see \cite{box21}. This occurs for
    \[N \in \{37, 43, 53, 61, 57, 65, 67, 73\}.\]
    \item The bielliptic $X_0(N)$ which have not been already dealt with in (i)--(iii), see \cite{najman-vukorepa}. These values are
    \[N \in \{60,62,69,79,83,89,92,94,95,101,119,131\}.\]
    \item Some $X_0(N)$ that were interesting for other reasons: the case $N=77$ was solved in \cite{balcik}, $N=91$ in \cite{Vux91} and the cases $N=125$ and $169$ in \cite{BPN}.
\end{enumerate}

Two broad methods are used to obtain these results. The first is by applying some kind of Mordell--Weil sieve (with different variations according to the properties of the curve $X_0(N)$), and the second is the `going down' method mentioned in the introduction. In order to prove Theorem \ref{maintm} we will use three distinct methods, two of which (namely `rank $0$' and the `Atkin--Lehner sieve') make use of a Mordell--Weil sieve. We split the set $\mathcal{L}$ (defined in (\ref{setL})) into three distinct sets, presented in Table \ref{tab:overview}, according to which method we use. We note that more than one of the methods we use could, in principle, be applied for certain levels $N \in \mathcal{L}_1 \cup \mathcal{L}_2$. We also note that our methods can be used to compute the set of quadratic points for many more levels than we have presented here, although we felt like considering 15 levels was a suitable number to display our techniques. The interested reader may consult the accompanying \texttt{Magma} files where we also ran our code for several other levels up to genus $12$. 

\begin{table}[ht]
    \centering
    \begin{tabular}{ll}
        \toprule
        Method & levels $N$ \\ 
        \midrule   
        Going down & $\mathcal{L}_1 \colonequals \{58, 68, 76 \}$ \\
        Rank $0$ & $\mathcal{L}_2 \colonequals \{ 80, 98, 100 \}$ \\
        Atkin--Lehner sieve   & $\mathcal{L}_3 \colonequals \{ 74, 85, 97, 103, 107, 109, 113, 121, 127 \}$ \\
        \bottomrule
    \end{tabular}   
    \caption{Overview of the methods we use.}
    \label{tab:overview} 
\end{table}
\subsection{Going down} \label{sec:gd}

When the quadratic points on $X_0(M)$ have been already classified, it is often beneficial to use this classification to obtain all the quadratic points on $X_0(N)$ for $N=dM$, where $d > 1$ is an integer. We have two cases: 1) when $X_0(M)$ has finitely many quadratic points, which have all been found, and 2) when $X_0(M)$ has infinitely many quadratic points, and all of them have been described, in the sense that all but finitely many are pullbacks of rational points on some quotient, while the remaining finitely many that are not pullbacks have been explicitly listed. 

The case 1) is of course easier: one need only check whether each of the elliptic curves corresponding to a quadratic point on $X_0(M)$ gives rise to a quadratic point on $X_0(N)$. The cases $M=34$ and $38$, with $d=2$ in each case, fall into this category. 

In case 2) it is necessary for $M$ and $d$ to be coprime as we will use \cite[Proposition 2.2]{najman-vukorepa}. If this is satisfied, then the problem reduces to determining the rational points on several quotients of modular curves. The case $(d,M)=(2,29)$ falls into this category.

We aim to prove the following proposition.

\begin{prop}\label{propL1} Let $N \in \mathcal{L}_1 = \{58,68,76\}$. The finitely many non-cuspidal quadratic points on the curve $X_0(N)$ are displayed in the tables in Section \ref{sec:tables}. Note that all non-cuspidal quadratic points have complex multiplication.
\end{prop}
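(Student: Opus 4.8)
The plan is to treat each level $N \in \{58, 68, 76\}$ via the `going down' method, using the already-classified quadratic points on a suitable proper divisor $M \mid N$. We have $58 = 2 \cdot 29$, $68 = 2 \cdot 34$, and $76 = 2 \cdot 38$, so in each case we take $d = 2$ and $M \in \{29, 34, 38\}$. For $M = 34$ and $M = 38$ the curve $X_0(M)$ has only finitely many quadratic points, all known from \cite{ozman-siksek}; this is case 1) described above. For $M = 29$, the curve $X_0(29)$ is hyperelliptic with infinitely many quadratic points, all described in \cite{bruin-najman}, and since $\gcd(2, 29) = 1$ we are in case 2) and may invoke \cite[Proposition 2.2]{najman-vukorepa}.

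For $N \in \{68, 76\}$, I would proceed as follows. Every quadratic point $Q$ on $X_0(N)$ maps under the degree-$d$ forgetful map $X_0(N) \to X_0(M)$ to a point $Q'$ of degree $\leq 2$ on $X_0(M)$; since $X_0(M)$ has no quadratic points of the relevant type outside the known finite list and the rational points are known, $Q'$ lies in a finite explicit set. For each such $Q'$ with its associated $j$-invariant $j(Q')$ (or associated elliptic curve up to the appropriate twist), I would enumerate all points of $X_0(N)$ lying above it — equivalently, all cyclic $N$-isogenies extending the given cyclic $M$-isogeny — and check which of these are defined over a quadratic field. Concretely, using the models and the $j$-map machinery of Section \ref{models_section}, one computes the fibre of $X_0(N) \to X_0(M)$ over $Q'$ and reads off the residue fields of its points. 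This is a finite computation once the finite set of source points is in hand. One then records exactly those lifts defined over a quadratic field, verifies (via the $j$-invariants and CM theory) that each is a CM point, and assembles the tables in Section \ref{sec:tables}.

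For $N = 58$, since $X_0(29)$ has infinitely many quadratic points, the naive fibre enumeration would be over an infinite set, so instead I would apply \cite[Proposition 2.2]{najman-vukorepa}: because $\gcd(2,29) = 1$, the non-pullback quadratic points on $X_0(58)$ correspond to rational points on a finite list of Atkin--Lehner quotients (and related curves) built from $X_0(58)$ and $X_0(29)$, while the pullback quadratic points on $X_0(58)$ come from rational points on $X_0(58)/w$ together with the finitely many exceptional quadratic points of $X_0(29)$ already listed in \cite{bruin-najman}. I would therefore (i) determine the rational points on the relevant quotient curves — each of low genus, so this is done by standard tools (Chabauty, or rank-$0$ arguments, or direct search with a sharp bound) — and (ii) for each exceptional quadratic point of $X_0(29)$ not arising as a pullback, check whether it lifts to a quadratic point of $X_0(58)$ exactly as in the finite case. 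Collecting the resulting points and confirming they are CM via their $j$-invariants completes this case.

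The main obstacle I anticipate is the rational-points determination on the auxiliary quotient curves in the $N = 58$ case (and, to a lesser extent, matching twists correctly when lifting $j$-invariants through $X_0(N) \to X_0(M)$): one must produce provably complete lists of rational points, which requires the quotient curves to have genus and rank amenable to Chabauty or to a Mordell--Weil sieve, and requires the models from Section \ref{models_section} to be explicit and of good reduction at the auxiliary primes used. The lifting computations themselves for $N \in \{68, 76\}$ are routine once the finite input sets and the $j$-map equations are available.
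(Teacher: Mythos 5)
Your proposal follows essentially the same route as the paper: the same decomposition $N=2M$ with $M\in\{29,34,38\}$, the same dichotomy between the finite case ($M=34,38$, handled by pushing the known $(j,L)$ pairs from \cite{ozman-siksek} through the $j$-map on a model of $X_0(N)$) and the infinite case ($M=29$, handled via \cite[Proposition~2.2]{najman-vukorepa}, Chabauty on $X_0(58)/w_{29}$, and the treatment of the exceptional and CM points). The only differences are presentational — the paper rules out the two non-CM points on $X_0(34)$ by an isogeny-class counting argument and names $X_0^{+}(116)$ (using Momose's theorem) and the least-CM-degree data explicitly, where you leave these as routine finite checks — so the approach is the same.
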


\begin{proof}
Let $N \in \{58, 68, 76\}$, let $M = N/2$, and let $P \in X_0(N)(K)$ be a quadratic point. The image of $P$ in $X_0(M)(K)$ has the same $j$-invariant as $P$.

For $N = 68$ and $76$, the finitely many quadratic points on $X_0(M)$, together with their $j$-invariants, have been classified in \cite{ozman-siksek}. This provides us with a finite list of possible pairs $(j,L)$ of $j$-invariants and quadratic fields such that $(j(P),K) = (j,L)$. In the case $M = 34$, there are two pairs of non-CM points (denoted $P_5$ and $P_6$ in Table~8.1 of \cite{ozman-siksek}). It can be seen that the corresponding elliptic curves do not admit a cyclic $4$-isogeny, as otherwise this isogeny class (over $L$) would need to contain at least $6$ curves with a $34$-isogeny. For each remaining pair $(j,L)$, we check using a model for $X_0(N)$ and equations for the $j$-map whether there exist any points in $X_0(N)(L)$ with $j$-invariant $j$.

Next, we consider the case $N = 58$. By \cite{bruin-najman}, we know that any elliptic curve with a $29$-isogeny over a quadratic field either corresponds to one of the finitely many points with exceptional $j$-invariants (listed in \cite[Table 5]{bruin-najman}), or is a $\Q$-curve  of degree $29$ which in addition has a $2$-isogeny. We first check that none of the elliptic curves with the exceptional $j$-invariants has a point of order $2$, and so the image of $P$ in $X_0(M)(K)$ must correspond to a $\Q$-curve of degree $29$ with a $2$-isogeny. We apply~\cite[Proposition~2.2]{najman-vukorepa} and conclude that either $P$ corresponds to a rational point on either $X_0(58)/w_{29}$ or $X_0^+(116)$, or that $P$ is a CM point.

Applying the classical Chabauty--Coleman method using \texttt{Magma}'s inbuilt function to compute the $\Q$-rational points on the genus $2$ curve $X_0(58)/w_{29}$ (whose Jacobian has rank $1$ over $\Q$), we obtain that it has precisely $8$ rational points. We compute the pullbacks of rational points on $X_0(58)/w_{29}$ and obtain the $4$ rational cusps and $6$ pairs of quadratic points on $X_0(58)$, all of which correspond to CM curves. These are the points $P_1, \dots, P_6$ in Table \ref{table58}.

Next, the non-cuspidal rational points on $X_0^+(116)$ correspond to CM points by~\cite[Theorem~0.1]{Momose1987}, and so $P$ must be a CM point. From the data associated to the paper \cite{CGPS21} (available at \url{https://github.com/fsaia/least-cm-degree/blob/master/Least%20Degrees/X0}), we obtain five possible pairs $(j,L)$ of $j$-invariants $j$ and quadratic fields $L$ for the pair $(j(P), K)$. Each of these pairs in fact already arises as the $j$-invariant and field of definition of one of the points $P_1, \dots, P_6$  found in the previous paragraph. It remains to check whether there are additional points on $X_0(58)$, different than the ones we have already discovered, corresponding to the pairs $(j,L)$ we have already found. It turns out that there are, and we discover two additional points, the points $P_7$ and $P_8$ in Table \ref{table58}, corresponding to $(j,L)=\left(-3375, \Q(\sqrt{-7}) \right)$.
\end{proof}

 We note that for the cases $68$ and $76$, it would also be possible in the proof above to construct the map from $X_0(N)$ to $X_0(M)$ and directly pull back points. 

\subsection{Rank $0$} \label{sec:rank0}

A method for computing the quadratic points on non-hyperelliptic curves $X_0(N)$ whose Jacobian $J_0(N)$ has rank $0$ over $\Q$ is developed in \cite{ozman-siksek}. We call this the `rank $0$' method. In this section, we use this method to prove the following result.

\begin{prop}\label{propL2} Let $N \in \mathcal{L}_2 = \{80, 98, 100\}$. The finitely many non-cuspidal quadratic points on the curve $X_0(N)$ are displayed in the Tables in Section \ref{sec:tables}. Note that all non-cuspidal points have complex multiplication. 
\end{prop}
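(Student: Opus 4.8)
## Proof Proposal for Proposition~\ref{propL2}

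The plan is to apply the `rank $0$' method of Ozman--Siksek \cite{ozman-siksek} to each level $N \in \{80, 98, 100\}$ in turn. First I would verify that $X_0(N)$ is non-hyperelliptic of genus $g$ (here $g = 7, 8, 7$ respectively) and that $\rk J_0(N)(\Q) = 0$; the latter can be checked via the known decomposition of $J_0(N)$ into modular abelian varieties and the analytic rank (using the BSD-type results available for these low-level, rank-$0$ factors). Then, using the techniques of \Cref{models_section}, I would compute a diagonalised model for $X_0(N)$ in $\mathbb{P}^{g-1}$, equations for the $j$-map, and, crucially, a good prime $p$ (not dividing $2N$) at which the model has good reduction --- verified quickly via \Cref{nonsing}.

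The core of the argument is a Mordell--Weil sieve on the symmetric square $X_0(N)^{(2)}$. Since $\rk J_0(N)(\Q) = 0$, the group $J_0(N)(\Q)$ is finite and can be computed (it is generated by differences of cusps, whose orders are controlled by the cuspidal divisor class group). For a pair of Galois-conjugate quadratic points $Q + Q^\sigma$, fixing a rational base point $D_0$ of degree $2$ (e.g. a sum of two rational cusps), the class $[Q + Q^\sigma - D_0]$ lies in the finite group $J_0(N)(\Q)$, so there are only finitely many possibilities for it. For each such class, and for each choice of good prime $p$ (taking several primes and intersecting the information), I would examine the reduction map: the divisor $Q + Q^\sigma$ reduces to an effective degree-$2$ divisor on $X_0(N)_{\mathbb{F}_p}$ in the correct class, and enumerating the $\mathbb{F}_p$-points and degree-$2$ divisors on the reduced curve cuts down the possibilities. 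Combining several primes, one is left with only the divisors that are pullbacks of rational points on $X_0(N)/w$ for some Atkin--Lehner involution $w$ (these have $Q^\sigma = w(Q)$), plus possibly some sporadic classes that must then be shown to contain no effective rational divisor, or to contain one that is visibly a genuine quadratic point. Finally I would compute the $j$-invariants of all surviving points via the $j$-map, confirm each lies in a quadratic field, identify each with a CM $j$-invariant from the standard list of thirteen rational CM $j$-invariants (possibly together with CM points defined over a real quadratic field), and record the endomorphism data in the tables of \Cref{sec:tables}.

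The main obstacle I anticipate is computational rather than conceptual: for genus $7$ and $8$ the symmetric square is large, and naively the sieve must handle $\#J_0(N)(\Q)$ cosets against degree-$2$ divisor data at each prime, so the feasibility hinges on (a) choosing the diagonalised model well so that good primes are plentiful and small, and (b) having enough independent primes that the intersection of the local conditions collapses to exactly the pullback locus. A secondary subtlety is ensuring the argument accounts for all effective degree-$2$ rational divisors, including those of the form $P + P^\sigma$ with $P$ itself rational (which would not be quadratic points) and reducible divisors $P_1 + P_2$ with $P_1, P_2 \in X_0(N)(\Q)$; since the rational points on these $X_0(N)$ are all cusps, such divisors only produce cuspidal or previously-known contributions and can be discarded. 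Once the sieve succeeds, matching the finitely many surviving $j$-invariants against the CM list and computing geometric endomorphism rings is routine.
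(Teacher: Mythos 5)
Your proposal follows essentially the same route as the paper: verify $\rk J_0(N)(\Q)=0$ via Kolyvagin--Logachev/Gross--Zagier type results, build a good model and $j$-map, and run the Ozman--Siksek Mordell--Weil sieve on $X_0(N)^{(2)}$ using the finiteness of $J_0(N)(\Q)$ and reduction at several odd primes of good reduction. The endgame (pullbacks of rational points on Atkin--Lehner quotients, discarding reducible divisors supported on cusps, matching surviving $j$-invariants to the CM list) also matches.

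There is, however, one concrete gap: you assert that $J_0(N)(\Q)$ ``can be computed (it is generated by differences of cusps, whose orders are controlled by the cuspidal divisor class group).'' That $J_0(N)(\Q) = C_0(N)(\Q)$ is the generalised Ogg conjecture and must be \emph{proved} for each level, not assumed. The standard way is to compare the lower bound $C_0(N)(\Q)$ with an upper bound on $\#J_0(N)(\Q)$ obtained by injecting torsion into $J_0(N)(\F_p)$ for odd primes $p \nmid N$; this succeeds for $N=98$ and $100$, but for $N=80$ it does not close the gap --- one can only show that $J_0(80)(\Q)/C_0(80)(\Q)$ embeds into $(\Z/2\Z)^2$. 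Your sieve as described would therefore stall at $N=80$, because you cannot enumerate the classes $[Q+Q^\sigma-D_0]$ exactly. The fix (which is what the paper does) is to run the sieve with an index $I$ such that $I\cdot J_0(N)(\Q)\subseteq C_0(N)(\Q)$, here $I=4$, i.e.\ to work with $[D-2P_0]=I\cdot[D']$ and eliminate all possibilities for $[D']$ ranging over the computable group. With that adjustment your argument goes through.
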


We note that the curve $X_0(80)$ has two pairs of cuspidal quadratic points defined over the field $\Q(\sqrt{-1})$.

In \cite{ozman-siksek}, the `rank $0$' method is used on curves up to genus $5$. Although we use the same method, the code of \cite{ozman-siksek} could not be used to extend the computations to curves of larger genus in a reasonable time. The reason we are able to work with higher genus curves is thanks to the models of $X_0(N)$ and equations for the $j$-maps that we use (described in Section \ref{models_section}), as well as our method for verifying nonsingularity at a given prime (see Lemma \ref{nonsing}). 

In order to use the `rank $0$' method, we must verify that $J_0(N)(\Q)$ has rank $0$. Using the results of \cite{GrossZagier1986} and \cite{KolyvaginLogachev} (see \cite[p.~4]{ACKP22} for further details), we were able to compute the rank of $J_0(N)(\Q)$ for every level $N \in \mathcal{L}$. The set $\mathcal{L}_2 = \{80, 98, 100 \}$ consists of the levels $N \in \mathcal{L}$ such that the rank of $J_0(N)(\Q)$ is $0$, and for which we could not use the `going down' method (we note that the method is not applicable in the cases $N = 80$ and $100$ due to the coprimality assumption on $M$ and $d$ specified in Section \ref{sec:gd}). 

We briefly outline the method used in \cite{ozman-siksek}, which uses a type of Mordell--Weil sieve to prove that a given list of quadratic points on $X_0(N)$ is complete. Let $P_0 \in X_0(N)(\Q)$ denote a rational cusp and let $\iota\colon X_0(N)^{(2)}(\Q) \hookrightarrow J_0(N)(\Q)$ denote the Abel--Jacobi map with basepoint $2P_0$, which is injective since $X_0(N)$ is non-hyperelliptic. Suppose that $D = Q + Q^\sigma \in X_0(N)^{(2)}(\Q)$ is a hypothetical unknown quadratic point. 

We compute the \textit{rational cuspidal divisor class group} $C_0(N)(\Q)$ of $X_0(N)$, which is defined as the subgroup of $J_0(N)$ generated by the linear equivalence classes of the degree 0 $\Q$-rational cuspidal divisors (divisors supported only on the cusps). By bounding the index of $C_0(N)(\Q)$  in $J_0(N)(\Q)$, we see that $I \cdot J_0(N)(\Q) \subseteq C_0(N)(\Q)$ for a positive integer $I$. For any level $N$, by the Manin--Drinfeld theorem, we have that $C_0(N)(\Q) \subseteq J_0(N)(\Q)_\mathrm{tors}$, and the generalised Ogg conjecture asserts that this inclusion is in fact an equality (see \cite[p.~2463]{ozman-siksek}). We also note that for any prime level $N$, thanks to the work of Mazur \cite[Theorem~1]{mazur:isogenies}, we know that $C_0(N)(\Q) = J_0(N)(\Q)_\mathrm{tors}$ and that $J_0(N)(\Q)_\mathrm{tors}$ is generated by the difference of the two rational cusps.

The point $D$ satisfies $[D-2P_0]=I \cdot [D^\prime]$ for some $[D^\prime] \in J_0(N)(\Q)$. We then employ a Mordell--Weil sieve, hoping to eliminate all possibilities for $D^\prime$, and therefore achieving a contradiction.

In order to prove Proposition \ref{propL2}, we will make use of the following lemma.

\begin{lem}\label{tors_lem0}
Let $C_0(N)(\Q)$ denote the rational cuspidal divisor class group of $X_0(N)$. For $N \in \{98, 100\}$ we have $J_0(N)(\Q) = C_0(N)(\Q)$. In particular, the generalised Ogg conjecture holds for these values of $N$. The structure of these groups is displayed in Tables \ref{table98} and \ref{table100} in Section \ref{sec:tables}.

For $N = 80$, we have that $4 \cdot J_0(N)(\Q) \subseteq C_0(N)(\Q)$. The structure of the group $C_0(N)(\Q)$ and the possibilities for the quotient group $J_0(N)(\Q) / C_0(N)(\Q)$ are displayed in Table \ref{table80} of Section  \ref{sec:tables}.
\end{lem}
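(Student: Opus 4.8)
The plan is to compute the rational cuspidal divisor class group $C_0(N)(\Q)$ explicitly for each $N \in \{80, 98, 100\}$, compute an upper bound for the index $[J_0(N)(\Q) : C_0(N)(\Q)]$, and compare the two. First I would recall that $J_0(N)(\Q)$ has rank $0$ for each of these levels (as already established in the preceding discussion using \cite{GrossZagier1986, KolyvaginLogachev}), so $J_0(N)(\Q) = J_0(N)(\Q)_{\mathrm{tors}}$ is finite, and by the Manin--Drinfeld theorem $C_0(N)(\Q) \subseteq J_0(N)(\Q)_{\mathrm{tors}}$. The structure of $C_0(N)(\Q)$ is a finite, purely combinatorial object: one enumerates the $\Q$-rational cusps of $X_0(N)$ (these are well understood, parametrised by divisors $d \mid N$ with $\gcd(d, N/d)$ conditions), writes down the degree-zero divisors supported on them, and computes the group generated by their classes using the known intersection/relation data for cuspidal divisors (e.g. via the period/winding-element computations, or simply via \texttt{Magma}'s modular symbols machinery, which is already being used elsewhere in the paper). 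This yields the explicit group structures recorded in Tables \ref{table80}, \ref{table98}, \ref{table100}.

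Next I would bound $[J_0(N)(\Q)_{\mathrm{tors}} : C_0(N)(\Q)]$ from above. The standard technique is to reduce modulo several primes $p$ of good reduction (with $p \nmid 2N$): for such $p$ the torsion injects, $J_0(N)(\Q)_{\mathrm{tors}} \hookrightarrow J_0(N)(\F_p)$, so $\#J_0(N)(\Q)_{\mathrm{tors}}$ divides $\gcd_p \#J_0(N)(\F_p)$, and each $\#J_0(N)(\F_p)$ is computed from the characteristic polynomial of Frobenius (i.e. from Hecke eigenvalues $a_p$ on $S_2(N)$), which is a fast computation. Taking a gcd over a handful of good primes will, for $N = 98$ and $N = 100$, match $\#C_0(N)(\Q)$ exactly, forcing $J_0(N)(\Q) = C_0(N)(\Q)$ and hence the generalised Ogg conjecture for these levels. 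For $N = 80$ the gcd of $\#J_0(\F_p)$ will exceed $\#C_0(80)(\Q)$, but only by a controlled $2$-power: one shows the $\ell$-primary part agrees for all odd $\ell$, and that the $2$-adic discrepancy is at most a factor of $4$, giving $4 \cdot J_0(80)(\Q) \subseteq C_0(80)(\Q)$; the finitely many group-theoretic possibilities for the quotient $J_0(80)(\Q)/C_0(80)(\Q)$ compatible with both $C_0(80)(\Q)$ and the reduction data are then listed in Table \ref{table80}.

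The main obstacle I anticipate is the $N = 80$ case: unlike the prime-level situation covered by Mazur's theorem, there is no a priori equality $C_0(N)(\Q) = J_0(N)(\Q)_{\mathrm{tors}}$, and the reduction-mod-$p$ bound genuinely leaves a gap at the prime $2$. Pinning down that the index divides $4$ (rather than a larger $2$-power) requires either a careful choice of auxiliary primes whose Frobenius data constrains the $2$-part tightly, or an independent argument bounding the $2$-torsion of $J_0(80)(\Q)$ (for instance via the component groups at $2$ and $5$, or via an explicit $2$-descent). Everything else---enumerating $\Q$-rational cusps, computing cuspidal divisor class relations, and computing $\#J_0(N)(\F_p)$ from Hecke data---is routine and already supported by the infrastructure described in Section \ref{models_section}; the content is really in the bookkeeping, which is why the precise group structures are deferred to the tables in Section \ref{sec:tables}.
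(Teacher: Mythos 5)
Your proposal follows essentially the same route as the paper: compute $C_0(N)(\Q)$ with the machinery of \cite{ozman-siksek}, then bound $J_0(N)(\Q)=J_0(N)(\Q)_{\mathrm{tors}}$ from above by reduction modulo odd primes of good reduction, with equality falling out for $N=98,100$ and a residual $2$-power ambiguity for $N=80$. The only point worth flagging is that for $N=80$ the paper closes the gap not by any of the heavier alternatives you suggest (component groups, $2$-descent) but simply by using the full group structure of $J_0(80)(\F_p)$ for several odd $p$ — since $J_0(80)(\Q)$ injects as a subgroup, intersecting these structures (rather than taking a gcd of orders) already pins the quotient $J_0(80)(\Q)/C_0(80)(\Q)$ down to a subgroup of $(\Z/2\Z)^2$.
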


\begin{proof}
We first compute the group $C_0(N)(\Q) \subseteq J_0(N)(\Q)$ using the code of \cite{ozman-siksek}. 

In the cases $N = 98$ and $100$ we simply compute an upper bound on $\#J_0(N)(\Q)$ by reducing modulo odd primes $p \nmid N$. This allowed us to prove that $J_0(N)(\Q) = C_0(N)(\Q)$.

In the case $N = 80$ we proceed similarly, but instead compute a (finite) supergroup of $J_0(N)(\Q)$ by considering the group structure of $J_0(N)(\F_p)$ for some odd primes $p$. This allowed us to prove that $J_0(N)(\Q) / C_0(N)(\Q)$ is isomorphic to a subgroup of $(\Z / 2\Z)^2$.
\end{proof}

\begin{proof}[Proof of Proposition \ref{propL2}]
We applied the method described above. Thanks to Lemma \ref{tors_lem0}, we may set $I = 4$ when $N = 80$ and set $I = 1$ for $N \in \{98,100\}$. The method was successful in each case.
\end{proof}

\subsection{Atkin--Lehner sieve} \label{sec:sieve}

In this section we aim to prove the following result.
\begin{prop}\label{propL3} Let $N \in \mathcal{L}_3 = \{74,85,97,103,107,109,113,121,127\}$. The finitely many non-cuspidal quadratic points on the curve $X_0(N)$ are displayed in the tables in Section \ref{sec:tables}.
\end{prop}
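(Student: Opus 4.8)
The plan is to apply the `Atkin--Lehner sieve' method to each level $N \in \mathcal{L}_3$. First, for each such $N$, I would construct a diagonalised model for $X_0(N)$ using the techniques of \Cref{models_section}, together with equations for the $j$-map, and verify via \Cref{nonsing} that this model has good reduction at a suitable collection of small primes $p \nmid 2N$. Since every $N \in \mathcal{L}_3$ is either prime or a prime power ($N = 121$) with $\rk J_0(N)(\Q) > 0$, the `rank $0$' method is unavailable, so instead one picks an Atkin--Lehner involution $w = w_N$ (for prime $N$ this is the unique non-trivial Atkin--Lehner involution) and works with the quotient curve $X_0(N)/w$ and the quotient map, which is easy to write down explicitly on a diagonalised model.

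Next I would set up the Mordell--Weil sieve. Let $\pi\colon X_0(N) \to X_0(N)/w$ be the quotient map and fix a rational cusp $P_0$ as basepoint. For a hypothetical unknown pair of quadratic points $D = Q + Q^\sigma$ whose support consists of genuinely quadratic (non-rational) points, one shows that either $\pi(Q) = \pi(Q^\sigma)$ — in which case $D$ is the pullback of a rational point on $X_0(N)/w$, and so either $Q^\sigma = w(Q)$ or $Q$ is a fixed point of $w$ — or else $D - \pi^*(\text{something})$ lands in a controlled part of $J_0(N)(\Q)$. Using the rational cuspidal divisor class group $C_0(N)(\Q)$ (which for prime $N$ is all of $J_0(N)(\Q)_{\mathrm{tors}}$ by Mazur, and is cyclic generated by the difference of the two cusps) together with generators of the free part of $J_0(N)(\Q)$ coming from known rational points or from $\pi^*$ of points on the quotient, one reduces the location of $[D - 2P_0]$ in $J_0(N)(\Q)$ to a finite explicit set of possibilities. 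Then one runs the sieve: reduce modulo several good primes $p$, compute the image of the candidate classes in $J_0(N)(\F_p)$ and the image of $X_0(N)^{(2)}(\F_p)$ (or the relevant coset), and eliminate candidates by a mismatch. Where a naive reduction fails to eliminate a class, one invokes a symmetric Chabauty criterion at a prime $p$ — checking that the relevant annihilating differentials cut out the expected points modulo $p$ — to rule out that the class is represented by a quadratic point not already in our list. This combination of Mordell--Weil sieve and symmetric Chabauty is exactly the framework of \cite{siksek:symchabauty, box21, najman-vukorepa}.

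Having run the sieve successfully, the remaining quadratic points are accounted for by: (a) the pullbacks $\pi^*(\text{rational point})$ of the rational points on $X_0(N)/w$, which must themselves be determined (by Chabauty--Coleman, or a quotient being of rank $0$, or known results), and (b) the fixed points of $w$ on $X_0(N)$, which form a finite set cut out by the vanishing of the minus-eigenspace coordinates and can be listed directly; one then checks which of these are non-cuspidal and computes their $j$-invariants, verifying in each case that they are CM points (with the single exception on $X_0(103)$ over $\Q(\sqrt{2885})$ coming from a fixed point or pullback that turns out to be non-CM). Finally, for each CM point one identifies the CM order and its geometric endomorphism ring. I expect the main obstacle to be the sieving step: for the higher-genus levels ($N = 113, 121, 127$, with genus up to $10$), the computations in $J_0(N)(\F_p)$ and the symmetric Chabauty verifications are heavy, and choosing good auxiliary primes and generators of $J_0(N)(\Q)$ so that the sieve actually terminates — rather than leaving uneliminable classes — is delicate; this is precisely where the efficient models and the fast nonsingularity test of \Cref{models_section} are essential.
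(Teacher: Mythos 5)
Your overall framework---choose an Atkin--Lehner involution, run a Mordell--Weil sieve with a symmetric Chabauty criterion, then handle pullbacks of rational points on the quotient and fixed points of the involution separately---matches the paper's two-stage strategy in Section~\ref{sec:sieve}. But there is a genuine gap in your central sieving step. You propose to locate $[D-2P_0]$ inside $J_0(N)(\Q)$ using ``generators of the free part of $J_0(N)(\Q)$ coming from known rational points or from $\pi^*$ of points on the quotient.'' That is the older sieve of \cite{box21}, and it is precisely what the paper's method is designed to avoid (see Remark~\ref{advantages_rem}): for these levels you would need explicit generators of a finite-index subgroup of, e.g., $J_0(127)(\Q)\isom\Z^3\times\Z/21\Z$ on a genus-$10$ curve, together with control of the index, which is the real computational barrier. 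The paper instead imposes the condition that the ranks of $J_0(N)(\Q)$ and $(1+w_d)J_0(N)(\Q)$ are \emph{equal}; then $1-w_d$ maps the entire Mordell--Weil group into $J_0(N)(\Q)_{\mathrm{tors}}$, the sieve runs entirely inside the torsion subgroup, and only torsion generators are needed (Mazur for prime $N$, Lemma~\ref{torsion_lem} for $N=74,85,121$). A successful sieve then gives $(1-w_d)[Q-D_\infty]=0$, hence $Q=w_d(Q)$, hence pullback or fixed point. You never state this rank-equality condition, and without it your phrase ``reduces the location of $[D-2P_0]$ to a finite explicit set of possibilities'' does not hold: the class ranges over an infinite group.

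Two smaller but concrete issues. First, your blanket choice $w=w_N$ fails for $N=74$: the paper must take $d=37$ there, exactly because the choice of $d$ is governed by the rank condition above; the quotient $X_0(74)/w_{37}$ (genus $4$, Jacobian rank $2$) is then dispatched by classical Chabauty (Lemma~\ref{rat_point_lem}). Second, for the composite levels $74$, $85$, $121$ you do not explain how to obtain generators of $J_0(N)(\Q)_{\mathrm{tors}}$; Mazur only covers prime level, and the paper needs the separate verification $C_0(N)(\Q)=J_0(N)(\Q)_{\mathrm{tors}}$ via reduction modulo several primes.
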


We will prove this proposition in two main stages:

\begin{enumerate}[label = (\Roman*)]
    \item Apply a Mordell--Weil sieve with respect to a suitably chosen Atkin--Lehner involution $w_d$ to determine all quadratic points on $X_0(N)$ that either do not arise as pullbacks of rational points on $X_0(N) /w_d$, or do not arise as fixed points of $w_d$.
    \item Compute the rational points on $X_0(N) / w_d$ and any fixed points of $w_d$ (defined over quadratic fields) on $X_0(N)$.
\end{enumerate}

The Mordell--Weil sieve we apply is an adaptation of the sieve employed by Najman and Vukorepa for $X_0(131)$ in \cite[p.~1806]{najman-vukorepa}, which in turn builds on the results of Box \cite{box21} and Siksek \cite{siksek:symchabauty}. For further background on the Mordell--Weil sieve in general, we refer to~\cite{BruinStoll2010}.

Starting with a model for $X_0(N)$, the sieve uses the following additional inputs:
\begin{itemize}
\item An Atkin--Lehner involution $w_d$ such that the ranks of $J_0(N)(\Q)$ and $(1+w_d)(J_0(N))(\Q)$ are equal.
\item Generators of $J_0(N)(\Q)_\mathrm{tors}$.
\item A set of odd primes $\mathcal{P}$ of good reduction for our model of $X_0(N)$.
\item A (possibly empty) set of quadratic points on $X_0(N)$ that do not arise as pullbacks of rational points on $X_0(N)/w_d$.
\end{itemize}

We write $X = X_0(N)$ and work with rational points on the symmetric square $X^{(2)}$  of this curve (as described in Section \ref{sec:overview}). Let $D_\infty$ be a sum of two rational cusps such that $w_d$ acts trivially on $D_\infty$ (e.g. $D_\infty = \infty + w_d(\infty)$). We use the divisor $D_\infty$ as the basepoint of the Abel--Jacobi map $\iota\colon X^{(2)} \hookrightarrow J(X)$, which is injective since $X$ is non-hyperelliptic. Since the ranks of $J(X)(\Q)$ and $(1+w_d)(J(X)(\Q))$ are equal, we see that $(1-w_d)(J(X)(\Q)) \subseteq J(X)(\Q)_\mathrm{tors}$. Let $p$ be a prime of good reduction for $X$. The following commutative diagram describes the set-up of the sieve:
\begin{equation*}\label{SieveDiagram}  \begin{tikzcd}[sep = large]
X^{(2)}(\Q) \arrow[hook]{r}{\iota}  \arrow{d}{\red_p} &  J(X)(\Q)  \arrow{d}{\red_p} \arrow{r}{1-w_d} & J(X)(\Q)_\mathrm{tors} \arrow[hook]{d}{\red_p}
\\ 
X^{(2)}(\mathbb{F}_p) \arrow{r}{\tilde{\iota}} & J(X)(\mathbb{F}_p) \arrow{r}{1-\tilde{w}_d} &  J(X)(\F_p)
\end{tikzcd} \end{equation*}
Here, $\red_p$ denotes reduction mod $p$ (which we note is injective on $J(X)(\Q)_\mathrm{tors}$), and $\tilde{\iota}$ and $\tilde{w}_d$ are the reductions mod $p$ of $\iota$ and $w_d$ respectively. 

Given a hypothetical unknown quadratic point $Q \in X^{(2)}(\Q)$ that does not map to a rational point on $X /w_d$, we first compute a set $S_Q \subseteq X^{(2)}(\mathbb{F}_p)$ of possibilities for $\red_p(Q)$. In order to construct the set $S_Q$, we consider each point $\red_p(R) \in X^{(2)}(\F_p)$ that is the reduction of a known non-pullback point $R \in X^{(2)}(\Q)$ with respect to $w_d$. We attempt to prove that $\red_p(Q) \ne \red_p(R)$ by applying a symmetric Chabauty criterion as stated in \cite[Theorem~2.1]{box21}. By the commutativity of the diagram, we then have that \[ ( (1-w_d) \circ \iota)(Q) \in W_p \colonequals \red_p^{-1}\big(((1-\tilde{w}_d) \circ \tilde{\iota} )(S_Q)\big) \subseteq J(X)(\Q)_\mathrm{tors}.\]
The set $W_p$ is explicitly computable. We aim to find a set of odd primes $\mathcal{P}$ of good reduction for our model such that \begin{equation*}
\bigcap_{p \in \mathcal{P}} W_p = [0] \in J(X)(\Q)_\mathrm{tors}.
\end{equation*}
If this is the case, then \[ ((1-w_d) \circ \iota)(Q) = (1-w_d)[Q-D_\infty] =  [0]. \]  It follows that $[Q-D_\infty]=w_d([Q-D_\infty])$ and hence, since $w_d$ acts trivially on $D_\infty$, we have that $[Q-w_d(Q)]=0$. Since $X_0(N)$ is non-hyperelliptic, it follows that $Q=w_d(Q)$ (as points in $X^{(2)}(\Q)$). Hence, $Q$ either arises from a pair of quadratic points, each of which is a fixed point of $w_d$, or $Q$ is the pullback of a rational point on $X_0(N)/w_d$ with respect to the quotient map $X_0(N) \rightarrow X_0(N)/w_d$. This completes Stage (I) of the method. 

We note that instead of working with $J(X)(\Q)_\mathrm{tors}$, knowledge of generators for any group $G$ satisfying $(1-w_d)(J(X))(\Q) \subseteq G \subseteq J(X)(\Q)_\mathrm{tors}$ would allow for a similar application of the sieve.

\begin{remark}\label{Chab_rem} 
We make some technical remarks about the application of the symmetric Chabauty criterion (\cite[Theorem~2.1]{box21}) in the sieve. \begin{itemize}
\item The diagonalised models we work with, combined with the fact that the ranks over $\Q$ of the Jacobians of the curves $X$ and $X / w_d $ are equal, allows us to easily compute the appropriate annihilating differentials to apply this criterion. 

\item We only apply the symmetric Chabauty criterion to points in $X^{(2)}(\F_p)$ that are the reductions of known non-pullback points. Although we could also apply it to the reductions of known pullback points, the criterion is guaranteed to fail for such points given our choice of annihilating differentials (which we compute as in \cite[Proposition 3.5]{box21}). However, even if we were able to compute additional annihilating differentials and prove that $\red_p(Q) \ne \red_p(R)$ for each known pullback point $R \in X^{(2)}(\Q)$, the set $W_p$ will still contain the element $[0]$ unless \emph{every} point in $X^{(2)}(\F_p)$ on which $w_d$ acts trivially happens to be the reduction of a known pullback point. This is highly unlikely to occur.

\item We do not make use of the \emph{relative} symmetric Chabauty criterion (\cite[Theorem~2.4]{box21}) in the sieve as it is also very unlikely to provide any additional information (for the same reason as above).
\end{itemize}
\end{remark}

\begin{remark}\label{advantages_rem} We take the opportunity to discuss the advantages and disadvantages of this sieve compared to the sieves used in \cite[pp.~327--328]{box21} and \cite[pp.~258--260]{MJ_cart}. 

The main advantage of the sieve we use is the fact that it is not necessary to work with a finite index subgroup of the Jacobian. This reduces computation time and simplifies the sieving algorithm. It also allows for a greater choice of primes to be used in the sieve when the rank of the Jacobian is $\geq 3$ (c.f. \cite[pp.~258--260]{MJ_cart}). Another advantage is the fact that the sieve does not explicitly work with the quotient curve $X_0(N) /w_d$.

There are certain disadvantages to the sieve we use. First of all, the sieve relies on the existence of a quotient curve with equal rank. Secondly, we also require knowledge of generators for $J(X)(\Q)_\mathrm{tors}$ (or generators of a group $G$ satisfying $(1-w_d)(J(X))(\Q) \subseteq G \subseteq J(X)(\Q)_\mathrm{tors}$, as mentioned above.) This is perhaps the main barrier to extending our computations for all curves $X_0(N)$ up to, say, genus $12$.
\end{remark}

One of the key inputs into the Mordell--Weil sieve is a list of quadratic points that do not arise as pullbacks of rational points $X_0(N) /w_d$  and are not fixed points of $w_d$ (if such points exist and are not input, then the sieve is guaranteed to fail). In order to find these points, we first search for rational points (up to some height bound) on each Atkin--Lehner quotient, and pull these back to $X_0(N)$. This is straightforward thanks to the diagonalised model for $X_0(N)$ that we work with. For each level we considered, this turned out to be sufficient to find all the necessary points. However, if this were not the case, we note that we can also search for quadratic points by intersecting a model for our curve with hyperplanes, as described in \cite[p.~30]{box21}. We significantly improved upon the running time of this method by noting that quadratic points give at most two degrees of linear independence: i.e.~already the first three coordinates of a quadratic point must satisfy a $\Z$-linear relationship. This allows us to look only at hyperplanes of the form $a_1x_1+a_2x_2+a_3x_3=0$. This is  notably faster than going through all the hyperplanes of the form $a_1x_1+\dots+a_gx_g = 0$ (since our curves have $g \geqslant 6$). This method carries over to search for cubic and other low-degree points.

In order to prove Proposition \ref{propL3}, we will first prove two lemmas. We recall from Section \ref{sec:rank0} that $C_0(N)(\Q)$ denotes the rational cuspidal divisor class group of $X_0(N)$. The following lemma is analogous to the cases $N = 98$ and $100$ of Lemma \ref{tors_lem0} and proved in the same way.

\begin{lem}\label{torsion_lem} Let $C_0(N)(\Q)$ denote the rational cuspidal divisor class group of $X_0(N)$. Then for $N \in \{74,85,121\}$ we have $J_0(N)(\Q)_\mathrm{tors} = C_0(N)(\Q)$. In particular, the generalised Ogg conjecture holds for these values of $N$. The structure of these groups is displayed in the tables in Section \ref{sec:tables}.
\end{lem}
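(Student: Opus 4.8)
The plan is to mimic exactly the argument used for $N \in \{98,100\}$ in Lemma~\ref{tors_lem0}, since the statement asserts that the same method works. First I would compute the rational cuspidal divisor class group $C_0(N)(\Q) \subseteq J_0(N)(\Q)$ using the code of \cite{ozman-siksek}, obtaining both its group structure and an explicit set of generators in terms of $\Q$-rational cuspidal divisors; these are the groups to be recorded in the tables of Section~\ref{sec:tables}. Since $N \in \{74,85,121\}$ are not prime, I cannot invoke Mazur's theorem directly, so instead I would produce an \emph{upper bound} for $\# J_0(N)(\Q)_\mathrm{tors}$ by reducing modulo several small odd primes $p \nmid N$ of good reduction and computing $\# J_0(N)(\F_p)$: the torsion subgroup injects into $J_0(N)(\F_p)$ under $\red_p$, so $\#J_0(N)(\Q)_\mathrm{tors}$ divides $\gcd_p \# J_0(N)(\F_p)$ (and one can even recover the group structure by intersecting the abelian group structures of the $J_0(N)(\F_p)$).

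The heart of the argument is then the numerical coincidence: one checks that the upper bound obtained from the reductions equals $\# C_0(N)(\Q)$. Combined with the containment $C_0(N)(\Q) \subseteq J_0(N)(\Q)_\mathrm{tors}$ — which holds by the Manin--Drinfeld theorem, since cuspidal divisor classes of degree $0$ are torsion — this forces $J_0(N)(\Q)_\mathrm{tors} = C_0(N)(\Q)$. The statement that the generalised Ogg conjecture holds for these $N$ is then immediate, as the generalised Ogg conjecture is precisely the assertion $C_0(N)(\Q) = J_0(N)(\Q)_\mathrm{tors}$.

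The only real obstacle is a practical one rather than a conceptual one: the bound from reduction mod $p$ must actually be \emph{sharp}, i.e.\ the primes $p$ must be chosen so that $\gcd_p \# J_0(N)(\F_p)$ collapses down to $\# C_0(N)(\Q)$. If it did not, the lemma would simply be false, so in practice this is verified by computation; for $N = 121$ (where $J_0(121)$ has a larger-dimensional factor) one may need to try several primes, but the computation remains feasible. I would carry this out prime by prime until the bound stabilises at $\#C_0(N)(\Q)$, and then record the resulting group structures in the tables, exactly as was done for $N \in \{98,100\}$. No symmetric Chabauty or Mordell--Weil sieve input is needed here — this lemma only concerns the torsion subgroup, and its role is to supply the generators of $J_0(N)(\Q)_\mathrm{tors}$ (equivalently, a valid group $G$) as an input to the Atkin--Lehner sieve of Proposition~\ref{propL3}.
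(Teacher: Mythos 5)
Your proposal matches the paper's proof: the authors compute $C_0(N)(\Q)$ with the code of \cite{ozman-siksek}, bound $\#J_0(N)(\Q)_\mathrm{tors}$ from above via the injection of torsion into $J_0(N)(\F_p)$ for a couple of odd primes of good reduction (e.g.\ $p=3,5$ for $N=74$, where the gcd of the orders collapses to $3^3\cdot 19 = \#C_0(74)(\Q)$), and conclude equality from the Manin--Drinfeld containment. Your observations that Mazur's theorem is unavailable for these composite levels and that the lemma's role is to feed torsion generators into the Atkin--Lehner sieve are both consistent with the paper.
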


\begin{lem}\label{rat_point_lem}
The curve $X_0(74) /w_{37}$ has precisely $9$ rational points and the curve $X_0(85) / w_{85}$ has precisely $8$ rational points.
\end{lem}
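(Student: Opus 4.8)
Both curves are Atkin--Lehner quotients of the explicitly understood $X_0(N)$, so the plan is to produce explicit (weighted) projective models for them using the techniques of \Cref{models_section}, search for their rational points up to a height bound, and then prove completeness of the resulting list using the Mordell--Weil structure of their Jacobians. Write $C_1 \colonequals X_0(74)/w_{37}$ and $C_2 \colonequals X_0(85)/w_{85}$.

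\textbf{Step 1 (Models).} Starting from a diagonalised model of $X_0(74)$ in $\mathbb{P}^7$ (resp.\ of $X_0(85)$ in $\mathbb{P}^6$), project onto the $w_{37}$-invariant (resp.\ $w_{85}$-invariant) coordinates, exactly as in the ``Maps to Atkin--Lehner quotients'' subsection, to obtain a model for $C_1$ (resp.\ $C_2$) in projective space; if the quotient turns out to be hyperelliptic, transform it to a standard model in weighted projective space. Record the genus of each $C_i$, which is small. \textbf{Step 2 (Find the points).} Search for rational points on $C_1$ and $C_2$ up to a height bound: among these will be the images of the rational cusps of $X_0(N)$ together with the images of the $\Q$-rational CM points fixed by $w_d$. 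This yields the claimed $9$ points on $C_1$ and $8$ points on $C_2$; it remains to prove there are no others.

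\textbf{Step 3 (Rank).} The Jacobian of $C_i$ is, up to isogeny, the $(1+w_d)$-part of $J_0(N)$, which decomposes into abelian subvarieties pulled back (via the two degeneracy maps) from $X_0(37)$ (resp.\ $X_0(17)$) together with pieces coming from newforms of level $N$. Using this modular decomposition, together with analytic ranks and the results of Gross--Zagier and Kolyvagin--Logachev (already invoked in \Cref{sec:rank0}), compute $\rk J(C_i)(\Q)$. \textbf{Step 4 (Complete the list).} If $\rk J(C_i)(\Q) = 0$, fix a rational basepoint, note that $C_i(\Q) \hookrightarrow J(C_i)(\Q) = J(C_i)(\Q)_{\mathrm{tors}}$, bound the torsion by reducing modulo a couple of primes of good reduction (using \Cref{nonsing} to certify good reduction when the genus is large), and check that the points of Step 2 already exhaust it. If $\rk J(C_i)(\Q) \geq 1$, apply Chabauty--Coleman: \texttt{Magma}'s inbuilt \texttt{Chabauty} function when $C_i$ is a genus $2$ curve of Mordell--Weil rank $\leq 1$ (as was done for $X_0(58)/w_{29}$ in the proof of \Cref{propL1}), and Coleman integration on the relevant model otherwise, to bound $\#C_i(\Q)$ and match it with the list found in Step 2.

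\textbf{Main obstacle.} The crux is the combination of Steps 3--4: obtaining explicit Mordell--Weil generators for $J(C_i)(\Q)$ and then successfully running a Chabauty--Coleman computation. This is essentially automatic when $C_i$ is a genus $2$ curve whose Jacobian has rank $\leq 1$, but is more delicate when $C_i$ has higher genus or when it is non-hyperelliptic, since one then needs Coleman integration on a less standard model. Should $\rk J(C_i)(\Q)$ equal the genus of $C_i$, classical Chabauty--Coleman would not suffice, and one would instead combine it with a Mordell--Weil sieve (or invoke quadratic Chabauty).
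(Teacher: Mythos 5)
Your proposal is correct in outline, and for $X_0(74)/w_{37}$ it is essentially the paper's argument: that quotient is non-hyperelliptic of genus $4$ with Jacobian of rank $2$, and the paper runs classical Chabauty via the Balakrishnan--Tuitman implementation \cite{BalTui} on a plane model, after certifying (exactly as you suggest) that differences of the $9$ known low-height points generate a finite-index subgroup of the rank-$2$ Mordell--Weil group by reducing modulo $3$ and $5$. Where you diverge is $X_0(85)/w_{85}$: rather than computing the rank and running Chabauty--Coleman directly on that curve, the paper observes that the rational points on the further quotient $X_0(85)/\langle w_{85},w_{17}\rangle$ are already determined in the literature \cite{hyperQ}, and simply pulls these back to $X_0(85)/w_{85}$ and checks which preimages are rational -- no $p$-adic computation at all. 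Your direct route should still succeed, but be aware it is not in the ``essentially automatic'' regime you describe: here $\rk J(C_2)(\Q)=2$ (the entire rank of $J_0(85)(\Q)$ lies in the $w_{85}$-invariant part -- this is exactly the hypothesis needed for the Atkin--Lehner sieve with $d=85$), while the genus of $X_0(85)/w_{85}$ is $3$, so classical Chabauty applies but requires Coleman integration and Mordell--Weil generators on a genus-$3$ curve. The paper's citation trick buys a much cheaper proof for $85$ at the cost of depending on an external result; your approach is self-contained and would generalise to levels where no such quotient has been treated, at the cost of one more nontrivial Chabauty computation.
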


\begin{proof} We start by considering the curve $X_0(85) /w_{85}$. In \cite[p.~107]{hyperQ}, the finitely many rational points on the full Atkin--Lehner quotient curve $X_0(85) / \langle w_{85},w_{17}\rangle$ are determined. Since this curve is a quotient of $X_0(85) /w_{85}$, it is straightforward to then verify that $X_0(85)/w_{85}$ has precisely $8$ rational points.

The curve $X_0(74)/w_{37}$ is non-hyperelliptic and the rank of its Jacobian over $\Q$ is smaller than its genus. We successfully applied the classical Chabauty method to determine all the rational points on this curve. We did this using the \texttt{Magma} implementation of classical Chabauty due to Balakrishnan and Tuitman \cite{BalTui}. Some additional details of this computation are provided in Section~\ref{ex_74}.
\end{proof}

\begin{proof}[Proof of Proposition \ref{propL3}]
We will apply the sieve described above. We set $d = N$, unless $N = 74$ in which case we set $d = 37$.
We first verify in each case that the ranks of the Jacobians over $\Q$ of the curves $X_0(N)$ and $X_0(N)/w_d$ are equal by checking that $(1-w_d)J_0(N)(\Q)$ has rank $0$. Next, we use Lemma \ref{torsion_lem} combined with the fact that $C_0(N)(\Q) = J_0(N)(\Q)_\mathrm{tors}$ when $N$ is prime to obtain generators for $J_0(N)(\Q)_\mathrm{tors}$.

We now apply the sieve. In each we were successful using primes in the range $3 \leq p \leq 11$  of good reduction for the curve.

To complete the proof, we proceed with Stage (II) of the method. We first compute any fixed points of $w_d$ defined over quadratic fields, which is a straightforward computation in \texttt{Magma}. Next, when $N = d$ and $N \ne 85$, the rational points on $X_0^+(N) = X_0(N)/w_d$ have been computed for each $N$  we are interested in across a series of papers \cite{ ACKP22, BBB, BDMTV2}. The remaining cases are $(N,d) = (74,37)$ and $(85,85)$ which are covered by Lemma \ref{rat_point_lem}. We then simply pull back the rational points on $X_0(N)/w_d$ to $X_0(N)$ to obtain the remaining quadratic points on $X_0(N)$.
\end{proof}

Propositions \ref{propL1}, \ref{propL2}, and \ref{propL3} combine to prove Theorem \ref{maintm}. 

\begin{remark}\label{running_time_rem} The running times for each level $N \in \mathcal{L}_3$ are displayed in the accompanying \texttt{Magma} files. The running times ranged from 23 seconds (for $N = 109$) to 11 minutes (for $N  =85$). Our implementation significantly reduces the computation time for some previously computed levels. For example, it took under 4 seconds to recover the classification of quadratic points on $X_0(53)$ obtained in \cite{box21}, whilst the original running time using the code of \cite{box21} is 33 minutes. We note that the running time of our algorithm could possibly be improved further by only using the \texttt{Place} command in \texttt{Magma} when working over finite fields (and not using it over $\Q$ as we do in certain instances). 
\end{remark}

\subsubsection{Example: quadratic points on $X_0(74)$} \label{ex_74}

We provide some details of the computations for $X_0(74)$, which is perhaps the most interesting curve we worked with. We note that despite the fact that the quadratic points on $X_0(37)$ have been classified \cite{box21}, their complicated structure means that we cannot compute the quadratic points on $X_0(74)$ using the `going down' method.

The curve $X_0(74)$ has genus $8$, and the quotient curve $X_0(74)/w_{37}$ is non-hyperelliptic of genus $4$. We start by computing a diagonalised model for $X_0(74)$ together with a map to the quotient curve $X_0(74)/w_{37}$ using the methods described in Section \ref{models_section}. 

We verify that the rank of $(1-w_{37})J_0(74)(\Q)$ is $0$, using the techniques of Section \ref{sec:rank0}, and as it will be needed afterwards, we also verify that the rank of the Jacobian of $X_0(74)/w_{37}$ over $\Q$ is $2$. We then verify the computation in Lemma \ref{torsion_lem} that $C_0(74)(\Q) \isom \Z / 3\Z \times \Z / 171 \Z \isom J_0(74)(\Q)_\mathrm{tors}$. We check the first isomorphism using the \texttt{Magma} code of \cite{ozman-siksek}, adapted to work with our diagonalised model. Next, we find that \[ \#J_0(74)(\F_3) = 3^3 \cdot 7^2 \cdot 19 \quad \text{and} \quad \#J_0(74)(\F_5) = 2^8 \cdot 3^3  \cdot 13 \cdot 19. \] This proves that $J_0(74)(\Q)_\mathrm{tors} \isom \Z / 3\Z \times \Z / 171 \Z$. 

Next, we search for quadratic points on $X_0(74)$ by pulling back rational points on the three quotient curves $X_0(74) / w_m$ for $m \in \{2,37,74\}$. We obtained two pairs of quadratic points (the points $P_1$ and $P_2$ in Table \ref{table74} together with their Galois conjugates) that do not arise as pullbacks of rational points on $X_0(74)/w_{37}$. Then, applying the sieve, we found that $\#W_3 = 13$, and that $W_3 \cap W_5 = [0]$, meaning that the sieving process was successful using the primes $3$ and $5$. 

Next, we continue with Stage (II) of the method. We compute the fixed points of $w_{37}$ on $X_0(74)$ and find that there is a single quadratic point in this fixed locus, the point $P_{11}$ in Table \ref{table74}. Finally, it remains to verify the computation in Lemma \ref{rat_point_lem} that the curve $X_0(74)/w_{37}$ has precisely $9$ rational points. We do this using the code of Balakrishnan and Tuitman \cite{BalTui} on a plane model of this curve found by a function from \cite{AABCCKW}. We use the  \texttt{effective\textunderscore chabauty} function from \cite{BalTui} with $p=11$ to show that $\Q$-rational points on $X_0(74)/w_{37}$ consist of exactly $9$ points. In order to use this function, we first check that the differences of the $9$ low-height rational points on this
quotient curve generate a finite index subgroup of its
Jacobian (which we recall has rank $2$). By considering the differences of pairs of rational points, and by working modulo $3$ and $5$, we were able to find two independent rational points of infinite order on the Jacobian.

\section{Tables}\label{sec:tables}

In the following tables we have included the non-cuspidal quadratic point data (up to Galois conjugation) for the curves $X_0(N)$ for $N \in \mathcal{L}$. For each quadratic point we have displayed its field of definition, it's $j$-invariant, and the corresponding CM discriminant when applicable. We have then displayed the action of the Atkin--Lehner involutions on each point. In the cases for which we computed the structure of $J_0(N)(\Q)$, this information is also included. 

In addition to the data presented in these tables (which is independent of our chosen models), for each curve $X_0(N)$, the projective models we used, equations for the Atkin--Lehner involutions, and the coordinates of each quadratic point are available in the accompanying \texttt{Magma} files. 


\begin{table}[!ht]
    \caption{All non-cuspidal quadratic points on \boldmath $X_0(58)$}
    \label{table58}
    \begin{flalign*}
	& \text{Genus: } 6 \\
    \end{flalign*} 
    {\small
    \begin{tabular}{cccc}
        \toprule
        Point & Field  & $j$-invariant & CM \\ [0.3ex]
        \midrule  
        $P_1$ & $\Q(\sqrt{-1})$ & $1728$ &  $-4$\\ [0.5ex]	
        $P_2$ & $\Q(\sqrt{-1})$ & $287496$ & $-16$  \\[0.5ex]  
        $P_3$ & $\Q(\sqrt{-7})$  & $-3375$ & $-7$ \\[0.5ex]
        $P_4$ & $\Q(\sqrt{-7})$ & $16581375$  &  $-28$  \\[0.5ex]
        $P_5$ & $\Q(\sqrt{-1})$ & $1728$ & $-4$ \\ [0.5ex]
        $P_6$ & $\Q(\sqrt{29})$ & $-56147767009798464000 \sqrt{29} + 302364978924945672000$ & $-232$  \\ [0.5ex] 
        $P_7$ & $\Q(\sqrt{-7})$  & $-3375$ & $-7$ \\[0.5ex]
        $P_8$ & $\Q(\sqrt{-7})$  & $-3375$ & $-7$ \\[0.5ex]
         \bottomrule
    \end{tabular}
    } \\
\begin{tikzpicture}[line cap=round,line join=round,>=triangle 45,x=1.0cm,y=1.0cm]
\tikzmath{\x1 = 0.35; \y1 =-0.05; \z1=180; \w1=0.2;} 
\clip(-1.7,-0.2596390243902512) rectangle (14.8,7.271873170731704);
\draw [line width=1.1pt] (0.,6.)-- (3.,6.);
\draw [line width=1.1pt] (0.,3.)-- (3.,3.);
\draw [line width=1.1pt] (0.,6.)-- (0.,3.);
\draw [line width=1.1pt] (3.,6.)-- (3.,3.);
\draw [line width=1.1pt] (0.,6.)-- (3.,3.);
\draw [line width=1.1pt] (0.,3.)-- (3.,6.);
\draw [line width=1.1pt] (5.,6.)-- (8.,6.);
\draw [line width=1.1pt] (5.,6.)-- (5.,3.);
\draw [line width=1.1pt] (5.,3.)-- (8.,3.);
\draw [line width=1.1pt] (8.,6.)-- (8.,3.);
\draw [line width=1.1pt] (5.,6.)-- (8.,3.);
\draw [line width=1.1pt] (5.,3.)-- (8.,6.);
\draw [line width=1.1pt] (1.14,0.665)-- (4.44,0.665);
\draw [line width=1.1pt] (1.14,0.815)-- (4.44,0.815);
\draw [shift={(0.65,0.71)},line width=1.1pt]  plot[domain=0.55:5.85,variable=\t]({1.*0.65*cos(\t r)+0.*0.65*sin(\t r)},{0.*0.65*cos(\t r)+1.*0.65*sin(\t r)});
\draw [shift={(5.,0.71)},line width=1.1pt]  plot[domain=-2.6685113621829863:2.6685113621829863,variable=\t]({1.*0.65*cos(\t r)+0.*0.65*sin(\t r)},{0.*0.65*cos(\t r)+1.*0.65*sin(\t r)});

\draw [line width=1.1pt] (8.05,0.8)-- (11.35,0.8);
\draw [line width=1.1pt] (8.05,0.65)-- (11.35,0.65);
\draw [shift={(7.53,0.71)},line width=1.1pt]  plot[domain=0.55:5.85,variable=\t]({1.*0.65*cos(\t r)+0.*0.65*sin(\t r)},{0.*0.65*cos(\t r)+1.*0.65*sin(\t r)});
\draw [shift={(11.9,0.71)},line width=1.1pt]  plot[domain=-2.7:2.6,variable=\t]({1.*0.65*cos(\t r)+0.*0.65*sin(\t r)},{0.*0.65*cos(\t r)+1.*0.65*sin(\t r)});

\draw [line width=1.1pt] (10.,6.)-- (13.,6.);
\draw [line width=1.1pt] (10.,6.)-- (10.,3.);
\draw [line width=1.1pt] (10.,3.)-- (13.,3.);
\draw [line width=1.1pt] (10.,6.)-- (13.,3.);
\draw [line width=1.1pt] (10.,3.)-- (13.,6.);
\draw [line width=1.1pt] (13.,6.)-- (13.,3.);
\begin{scriptsize}
\node [label={[label distance=\dist mm]\NE: {\textcolor{ududff}{$P_1$}}},circle,fill=blue,draw=blue,scale=0.5](A1) at (0,6) {};
\node [label={[label distance=\distlarger mm]\NW: {\textcolor{ududff}
{$P_1^{\, \sigma}$}}},circle,fill=blue,draw=blue,scale=0.5](A1) at (3,6) {};
\node [label={[label distance=\dist mm]\SE: {\textcolor{ududff}{$P_2$}}},circle,fill=blue,draw=blue,scale=0.5](A1) at (0,3) {};
\node [label={[label distance=\distlarger mm]\SW: {\textcolor{ududff}
{$P_2^{\, \sigma}$}}},circle,fill=blue,draw=blue,scale=0.5](A1) at (3,3) {};
\draw[color=black] (1.415,6.2) node {$w_{29}$};
\draw[color=black] (1.58,2.7) node {$w_{29}$};
\draw[color=black] (-0.365,4.62) node {$w_{58}$};
\draw[color=black] (3.365,4.62) node {$w_{58}$};
\draw[color=black] (0.5,5.15) node {$w_{2}$};
\draw[color=black] (2.5,5.15) node {$w_{2}$};
\node [label={[label distance=\dist mm]\SE: {\textcolor{ududff}{$P_4$}}},circle,fill=blue,draw=blue,scale=0.5](A1) at (5,3) {};
\node [label={[label distance=\distlarger mm]\SW: {\textcolor{ududff}
{$P_4^{\, \sigma}$}}},circle,fill=blue,draw=blue,scale=0.5](A1) at (8,3) {};
\node [label={[label distance=\dist mm]\NE: {\textcolor{ududff}{$P_3$}}},circle,fill=blue,draw=blue,scale=0.5](A1) at (5,6) {};
\node [label={[label distance=\distlarger mm]\NW: {\textcolor{ududff}
{$P_3^{\, \sigma}$}}},circle,fill=blue,draw=blue,scale=0.5](A1) at (8,6) {};
\draw[color=black] (6.48,6.2) node {$w_{29}$};
\draw[color=black] (4.65,4.62) node {$w_{58}$};
\draw[color=black] (6.5,2.7) node {$w_{29}$};
\draw[color=black] (8.35,4.62) node {$w_{58}$};
\draw[color=black] (5.5,5.15) node {$w_{2}$};
\draw[color=black] (7.5,5.15) node {$w_{2}$};
\draw [fill=ududff] (1.14,0.7272487804877982) circle (2.5pt);
\draw[color=ududff] (1.44,1.25) node {$P_5$};
\draw [fill=ududff] (4.44,0.7272487804877982) circle (2.5pt);
\draw[color=ududff] (4.2,1.24) node {$P_5^{\, \sigma}$};
\draw[color=black] (2.94,1.02) node {$w_{58}$};
\draw[color=black] (2.94,0.45) node {$w_{29}$};
\draw[color=black] (0.38,1.) node {$w_2$};
\draw[color=black] (5.28,1.) node {$w_2$};
\draw [fill=ududff] (8.05,0.7272487804877982) circle (2.5pt);
\draw[color=ududff] (8.3,1.25) node {$P_{6}$};
\draw [fill=ududff] (11.35,0.7272487804877982) circle (2.5pt);
\draw[color=ududff] (11.06,1.24) node {$P_{6}^{\, \sigma}$};
\draw[color=black] (9.8,0.45) node {$w_2$};
\draw[color=black] (9.8,1.02) node {$w_{29}$};
\draw[color=black] (7.35,0.98) node {$w_{58}$};
\draw[color=black] (12.15,0.98) node {$w_{58}$};
\node [label={[label distance=\dist mm]\SE: {\textcolor{ududff}{$P_8$}}},circle,fill=blue,draw=blue,scale=0.5](A1) at (10,3) {};
\node [label={[label distance=\distlarger mm]\SW: {\textcolor{ududff}
{$P_8^{\, \sigma}$}}},circle,fill=blue,draw=blue,scale=0.5](A1) at (13,3) {};
\node [label={[label distance=\dist mm]\NE: {\textcolor{ududff}{$P_7$}}},circle,fill=blue,draw=blue,scale=0.5](A1) at (10,6) {};
\node [label={[label distance=\distlarger mm]\NW: {\textcolor{ududff}
{$P_7^{\, \sigma}$}}},circle,fill=blue,draw=blue,scale=0.5](A1) at (13,6) {};
\draw[color=black] (11.47,6.2) node {$w_{58}$};
\draw[color=black] (9.65,4.62) node {$w_{29}$};
\draw[color=black] (11.5,2.7) node {$w_{58}$};
\draw[color=black] (10.4,5.15) node {$w_2$};
\draw[color=black] (12.6,5.15) node {$w_2$};
\draw[color=black] (13.35,4.62) node {$w_{29}$};
\end{scriptsize}
\end{tikzpicture}
\end{table}


\begin{table}[!ht]\label{68}
    \caption{All non-cuspidal quadratic points on \boldmath $X_0(68)$}
    \label{table68}
    \begin{flalign*}
	& \text{Genus: } 7 \\
    \end{flalign*} 
    {\small
    \begin{tabular}{cccc}
	\toprule
	Point & Field  & $j$-invariant & CM \\ [0.3ex]
	\midrule 
	$P_1$ & $\Q(\sqrt{-1})$ & $1728$ &  $-4$\\ [0.5ex]
	$P_2$ & $\Q(\sqrt{-1})$ & $287496$ & $-16$ \\ [0.5ex]
	$P_3$ & $\Q(\sqrt{-1})$  & $287496$ & $-16$ \\ [0.5ex] 
	\bottomrule
    \end{tabular}
    } \\ \vspace{10pt}
\begin{tikzpicture}[line cap=round,line join=round,>=triangle 45,x=1.0cm,y=1.0cm]
\clip(-6.084311271347847,2.315162519980797) rectangle (6.873260078590276,7.039910758690617);
\draw [line width=1.1pt] (-5.,6.)-- (-5.,3.);
\draw [line width=1.1pt] (-5.,6.)-- (-2.,6.);
\draw [line width=1.1pt] (-2.,6.)-- (-2.,3.);
\draw [line width=1.1pt] (-5.,3.)-- (-2.,3.);
\draw [line width=1.1pt] (-5.,6.)-- (-2.,3.);
\draw [line width=1.1pt] (-5.,3.)-- (-2.,6.);
\draw [line width=1.1pt] (1.66,4.87) -- (5.63,4.87);
\draw [line width=1.1pt] (1.66,4.72) -- (5.63,4.72);
\draw [shift={(1.1,4.76)},line width=1.1pt]  plot[domain=0.6:5.9,variable=\t]({1.*0.65*cos(\t r)+0.*0.65*sin(\t r)},{0.*0.65*cos(\t r)+1.*0.65*sin(\t r)});
\draw [shift={(6.15,4.76)},line width=1.1pt]  plot[domain=-2.7:2.6,variable=\t]({1.*0.65*cos(\t r)+0.*0.65*sin(\t r)},{0.*0.65*cos(\t r)+1.*0.65*sin(\t r)});
\begin{scriptsize}
\node [label={[label distance=\dist mm]\NE: {\textcolor{ududff}{$P_1$}}},circle,fill=blue,draw=blue,scale=0.5](A1) at (-5,6) {};
\node [label={[label distance=\distlarger mm]\NW: {\textcolor{ududff}
{$P_1^{\, \sigma}$}}},circle,fill=blue,draw=blue,scale=0.5](A1) at (-2,6) {};
\node [label={[label distance=\dist mm]\SE: {\textcolor{ududff}{$P_2$}}},circle,fill=blue,draw=blue,scale=0.5](A1) at (-5,3) {};
\node [label={[label distance=\distlarger mm]\SW: {\textcolor{ududff}
{$P_2^{\, \sigma}$}}},circle,fill=blue,draw=blue,scale=0.5](A1) at (-2,3) {};
\draw[color=black] (-5.344168306914762,4.55) node {$w_{4}$};
\draw[color=black] (-3.5,6.25) node {$w_{17}$};
\draw[color=black] (-1.6333145400310738,4.55) node {$w_{4}$};
\draw[color=black] (-3.2352678055163815,2.75) node {$w_{17}$};
\draw[color=black] (-4.5, 5.1) node {$w_{68}$};
\draw[color=black] (-2.5, 5.1) node {$w_{68}$};
\draw [fill=ududff] (1.6358458536585367,4.804653658536576) circle (2.5pt);
\draw[color=ududff] (1.86,5.3) node {$P_3$};
\draw [fill=ududff] (5.63534,4.8) circle (2.5pt);
\draw[color=ududff] (5.3,5.3) node {$P_3^{\, \sigma}$};
\draw[color=black] (3.63,5.07) node {$w_{68}$};
\draw[color=black] (3.63,4.52) node {$w_{17}$};
\draw[color=black] (0.88,5.02) node {$w_4$};
\draw[color=black] (6.4,5.02) node {$w_4$};
\end{scriptsize}
\end{tikzpicture}
\end{table}


\begin{table}[!ht]
    \caption{All non-cuspidal quadratic points on \boldmath $X_0(74)$}
    \label{table74}
    \begin{flalign*}
	& \text{Genus: } 8 \\
        &J_0(74)(\Q) \isom \Z^2\times \Z/3\Z \times \Z/171\Z \\ 
    \end{flalign*}
    {\small
    \begin{tabular}{cccc}
	\toprule
	Point & Field & $j$-invariant & CM \\ [0.3ex]
	\midrule 
	$P_1$ & $\Q(\sqrt{-7})$ & $-3375$ &  $-7$\\ [0.5ex]
        $P_2$ & $\Q(\sqrt{-7})$ & $-3375$ & $-7$\\ [0.5ex]
        $P_3$  & $\Q(\sqrt{-7})$ & $-3375$ & $-7$ \\ [0.5ex]
        $P_4$ & $\Q(\sqrt{-7})$  &  $16581375$ & $-28$ \\  [0.5ex]
        $P_5$ & $\Q(\sqrt{-1})$ &  $1728$ & $-4$ \\ [0.5ex]
        $P_6$ & $\Q(\sqrt{-1})$ &  $1728$ & $-4$ \\ [0.5ex]
        $P_7$ & $\Q(\sqrt{-1})$ & $287496$ & $-16$ \\ [0.5ex]
        $P_8$ & $\Q(\sqrt{-3})$ & $54000$ & $-12$ \\ [0.5ex]
        $P_{9}$ & $\Q(\sqrt{-3})$ & $0$ & $-3$ \\  [0.5ex]
        $P_{10}$ & $\Q(\sqrt{37})$  & $-3260047059360000\sqrt{37} + 19830091900536000$ & $-148$ \\ [0.5ex] 
	\bottomrule
    \end{tabular}
    }\\
    \vspace{-50pt}
\begin{tikzpicture}[line cap=round,line join=round,>=triangle 45,x=1.0cm,y=1.0cm]
\hspace{3pt}
\clip(-5.999549268292689,-4.856458536585376) rectangle (9.08944585365854,9.349531707317073);
\draw [line width=1.1pt] (-5.,6.)-- (-5.,3.);
\draw [line width=1.1pt] (-5.,6.)-- (-2.,6.);
\draw [line width=1.1pt] (-2.,6.)-- (-2.,3.);
\draw [line width=1.1pt] (-5.,3.)-- (-2.,3.);
\draw [line width=1.1pt] (-5.,6.)-- (-2.,3.);
\draw [line width=1.1pt] (-5.,3.)-- (-2.,6.);
\draw [line width=1.1pt] (0.,6.)-- (3.,6.);
\draw [line width=1.1pt] (0.,3.)-- (3.,3.);
\draw [line width=1.1pt] (0.,6.)-- (0.,3.);
\draw [line width=1.1pt] (3.,6.)-- (3.,3.);
\draw [line width=1.1pt] (0.,6.)-- (3.,3.);
\draw [line width=1.1pt] (0.,3.)-- (3.,6.);
\draw [line width=1.1pt] (5.,6.)-- (8.,6.);
\draw [line width=1.1pt] (5.,6.)-- (5.,3.);
\draw [line width=1.1pt] (5.,3.)-- (8.,3.);
\draw [line width=1.1pt] (8.,6.)-- (8.,3.);
\draw [line width=1.1pt] (5.,6.)-- (8.,3.);
\draw [line width=1.1pt] (5.,3.)-- (8.,6.);
\draw [line width=1.1pt] (-4.,1.)-- (-1.,1.);
\draw [line width=1.1pt] (-4.,-2.)-- (-1.,-2.);
\draw [line width=1.1pt] (-4.,1.)-- (-4.,-2.);
\draw [line width=1.1pt] (-1.,1.)-- (-1.,-2.);
\draw [line width=1.1pt] (-4.,1.)-- (-1.,-2.);
\draw [line width=1.1pt] (-4.,-2.)-- (-1.,1.);
\draw [line width=1.1pt] (2.2,0.45)-- (5.68,0.45);
\draw [line width=1.1pt] (2.2,0.3)-- (5.68,0.3);
\draw [shift={(1.6,0.44)},line width=1.1pt]  plot[domain=0.4:5.7,variable=\t]({1.*0.65*cos(\t r)+0.*0.65*sin(\t r)},{0.*0.65*cos(\t r)+1.*0.65*sin(\t r)});
\draw [shift={(6.3,0.44)},line width=1.1pt]  plot[domain=-2.6:2.7,variable=\t]({1.*0.65*cos(\t r)+0.*0.65*sin(\t r)},{0.*0.65*cos(\t r)+1.*0.65*sin(\t r)});
\draw [line width=1.1pt] (2.2,-2.42)-- (5.68,-2.42);
\draw [line width=1.1pt] (2.2,-2.57)-- (5.68,-2.57);
\draw [shift={(1.6,-2.47)},line width=1.1pt]  plot[domain=0.45:5.75,variable=\t]({1.*0.65*cos(\t r)+0.*0.65*sin(\t r)},{0.*0.65*cos(\t r)+1.*0.65*sin(\t r)});
\draw [shift={(6.3,-2.47)},line width=1.1pt]  plot[domain=-2.6:2.7,variable=\t]({1.*0.65*cos(\t r)+0.*0.65*sin(\t r)},{0.*0.65*cos(\t r)+1.*0.65*sin(\t r)});
\begin{scriptsize}
\node [label={[label distance=\dist mm]\NE: {\textcolor{ududff}{$P_1$}}},circle,fill=blue,draw=blue,scale=0.5](A1) at (-5,6) {};
\node [label={[label distance=\distlarger mm]\NW: {\textcolor{ududff}
{$P_1^{\, \sigma}$}}},circle,fill=blue,draw=blue,scale=0.5](A1) at (-2,6) {};
\node [label={[label distance=\dist mm]\SE: {\textcolor{ududff}{$P_2$}}},circle,fill=blue,draw=blue,scale=0.5](A1) at (-5,3) {};
\node [label={[label distance=\distlarger mm]\SW: {\textcolor{ududff}
{$P_2^{\, \sigma}$}}},circle,fill=blue,draw=blue,scale=0.5](A1) at (-2,3) {};
\draw[color=black] (-5.35,4.55) node {$w_{37}$};
\draw[color=black] (-3.5,6.2) node {$w_{74}$};
\draw[color=black] (-1.65,4.55) node {$w_{37}$};
\draw[color=black] (-3.55,2.75) node {$w_{74}$};
\draw[color=black] (-4.53,5.1) node {$w_2$};
\draw[color=black] (-2.5,5.1) node {$w_2$};
\node [label={[label distance=\dist mm]\NE: {\textcolor{ududff}{$P_3$}}},circle,fill=blue,draw=blue,scale=0.5](A1) at (0,6) {};
\node [label={[label distance=\distlarger mm]\NW: {\textcolor{ududff}
{$P_3^{\, \sigma}$}}},circle,fill=blue,draw=blue,scale=0.5](A1) at (3,6) {};
\node [label={[label distance=\dist mm]\SE: {\textcolor{ududff}{$P_4$}}},circle,fill=blue,draw=blue,scale=0.5](A1) at (0,3) {};
\node [label={[label distance=\distlarger mm]\SW: {\textcolor{ududff}
{$P_4^{\, \sigma}$}}},circle,fill=blue,draw=blue,scale=0.5](A1) at (3,3) {};
\draw[color=black] (1.48,6.2) node {$w_{37}$};
\draw[color=black] (1.7007726829268284,2.75) node {$w_{37}$};
\draw[color=black] (-0.35,4.55) node {$w_{74}$};
\draw[color=black] (3.35,4.55) node {$w_{74}$};
\draw[color=black] (0.5,5.1) node {$w_{2}$};
\draw[color=black] (2.5,5.1) node {$w_{2}$};
\node [label={[label distance=\dist mm]\NE: {\textcolor{ududff}{$P_8$}}},circle,fill=blue,draw=blue,scale=0.5](A1) at (5,6) {};
\node [label={[label distance=\distlarger mm]\NW: {\textcolor{ududff}
{$P_8^{\, \sigma}$}}},circle,fill=blue,draw=blue,scale=0.5](A1) at (8,6) {};
\node [label={[label distance=\dist mm]\SE: {\textcolor{ududff}{$P_9$}}},circle,fill=blue,draw=blue,scale=0.5](A1) at (5,3) {};
\node [label={[label distance=\distlarger mm]\SW: {\textcolor{ududff}
{$P_9^{\, \sigma}$}}},circle,fill=blue,draw=blue,scale=0.5](A1) at (8,3) {};
\draw[color=black] (6.45,6.2) node {$w_{37}$};
\draw[color=black] (4.65,4.55) node {$w_{2}$};
\draw[color=black] (6.50535804878049,2.75) node {$w_{37}$};
\draw[color=black] (8.358,4.55) node {$w_{2}$};
\draw[color=black] (5.45,5.1) node {$w_{74}$};
\draw[color=black] (7.5,5.1) node {$w_{74}$};
\node [label={[label distance=\dist mm]\NE: {\textcolor{ududff}{$P_6$}}},circle,fill=blue,draw=blue,scale=0.5](A1) at (-4,1) {};
\node [label={[label distance=\distlarger mm]\NW: {\textcolor{ududff}
{$P_6^{\, \sigma}$}}},circle,fill=blue,draw=blue,scale=0.5](A1) at (-1,1) {};
\node [label={[label distance=\dist mm]\SE: {\textcolor{ududff}{$P_7$}}},circle,fill=blue,draw=blue,scale=0.5](A1) at (-4,-2) {};
\node [label={[label distance=\distlarger mm]\SW: {\textcolor{ududff}
{$P_7^{\, \sigma}$}}},circle,fill=blue,draw=blue,scale=0.5](A1) at (-1,-2) {};
\draw[color=black] (-2.5,1.2) node {$w_{37}$};
\draw[color=black] (-2.5,-2.2) node {$w_{37}$};
\draw[color=black] (-4.35,-0.5) node {$w_{74}$};
\draw[color=black] (-0.65,-0.5) node {$w_{74}$};
\draw[color=black] (-3.5,0.11) node {$w_{2}$};
\draw[color=black] (-1.5,0.11) node {$w_{2}$};
\draw [fill=ududff] (2.21,0.389629268292676) circle (2.5pt);
\draw[color=ududff] (2.5,0.88) node {$P_5$};
\draw [fill=ududff] (5.78,0.389629268292676) circle (2.5pt);
\draw[color=ududff] (5.45,0.88) node {$P_5^{\, \sigma}$};
\draw[color=black] (4.,0.65) node {$w_{37}$};
\draw[color=black] (4.,0.1) node {$w_{74}$};
\draw[color=black] (1.35,0.65) node {$w_2$};
\draw[color=black] (6.6,0.65) node {$w_2$};
\draw [fill=ududff] (2.22,-2.5) circle (2.5pt);
\draw[color=ududff] (2.475,-2.05) node {$P_{10}$};
\draw [fill=ududff] (5.75,-2.5) circle (2.5pt);
\draw[color=ududff] (5.35,-2.05) node {$P_{10}^{\, \sigma}$};
\draw[color=black] (4.,-2.22) node {$w_2$};
\draw[color=black] (4.,-2.77) node {$w_{74}$};
\draw[color=black] (1.35,-2.25) node {$w_{37}$};
\draw[color=black] (6.55,-2.25) node {$w_{37}$};
\end{scriptsize}
\end{tikzpicture}
\end{table}


\begin{table}[!ht]
    \caption{All non-cuspidal quadratic points on \boldmath $X_0(76)$}
    \label{table76}
    \begin{flalign*}
	& \text{Genus: } 8 \\
    \end{flalign*} 
    {\small
    \begin{tabular}{cccc}
	\toprule
	Point & Field & $j$-invariant & CM \\ [0.3ex]
	\midrule
	$P_1$ & $\Q(\sqrt{-3})$ & 54000 &  $-12$\\ [0.5ex]
	$P_2$ & $\Q(\sqrt{-3})$ & 54000 & $-12$ \\ [0.5ex] 
	\bottomrule
    \end{tabular}
    } \\
\begin{tikzpicture}[line cap=round,line join=round,>=triangle 45,x=1.0cm,y=1.0cm]
\clip(-6.85658341463415,1.5842829268292604) rectangle (-0.26001756097561046,7.6354634146341365);
\draw [line width=1.1pt] (-5.,6.)-- (-5.,3.);
\draw [line width=1.1pt] (-5.,6.)-- (-2.,6.);
\draw [line width=1.1pt] (-2.,6.)-- (-2.,3.);
\draw [line width=1.1pt] (-5.,3.)-- (-2.,3.);
\draw [line width=1.1pt] (-5.,6.)-- (-2.,3.);
\draw [line width=1.1pt] (-5.,3.)-- (-2.,6.);
\begin{scriptsize}
\node [label={[label distance=\dist mm]\NE: {\textcolor{ududff}{$P_1$}}},circle,fill=blue,draw=blue,scale=0.5](A1) at (-5,6) {};
\node [label={[label distance=\distlarger mm]\NW: {\textcolor{ududff}
{$P_1^{\, \sigma}$}}},circle,fill=blue,draw=blue,scale=0.5](A1) at (-2,6) {};
\node [label={[label distance=\dist mm]\SE: {\textcolor{ududff}{$P_2$}}},circle,fill=blue,draw=blue,scale=0.5](A1) at (-5,3) {};
\node [label={[label distance=\distlarger mm]\SW: {\textcolor{ududff}
{$P_2^{\, \sigma}$}}},circle,fill=blue,draw=blue,scale=0.5](A1) at (-2,3) {};
\draw[color=black] (-5.35,4.5) node {$w_{4}$};
\draw[color=black] (-3.45,6.2) node {$w_{76}$};
\draw[color=black] (-1.65,4.5) node {$w_4$};
\draw[color=black] (-3.45,2.8) node {$w_{76}$};
\draw[color=black] (-4.45,5.05) node {$w_{19}$};
\draw[color=black] (-2.5,5.05) node {$w_{19}$};
\end{scriptsize} 
\end{tikzpicture}
\end{table}


\begin{table}[!ht]
    \caption{All non-cuspidal quadratic points on \boldmath $X_0(80)$}
    \label{table80}
    \begin{flalign*}
	& \text{Genus: } 7 \\
	& C_0(80)(\Q) \isom \Z/12\Z \times  \Z/24\Z \times  \Z/24\Z \\ 
        &  J_0(80)(\Q)/C_0(80)(\Q) \isom  0, \Z/ 2\Z  \text{ or }  (\Z/ 2\Z)^2 \\
    \end{flalign*} 
    No non-cuspidal quadratic points.
 \end{table}


\begin{table}[!ht]
    \caption{All non-cuspidal quadratic points on \boldmath $X_0(85)$ }
    \label{table85}
    \begin{align*}
	& \text{Genus: } 7 \\
        & J_0(85)(\Q) \isom \Z^2 \times \Z/8\Z \times \Z/48\Z \\
    \end{align*}
    {\small
    \begin{tabular}{cccc}
	\toprule
	Point & Field  & $j$-invariant & CM \\ [0.3ex]
	\midrule 
        $P_1$ & $\Q(\sqrt{-19})$ & $-884736$ & $-19$ \\ [0.5ex]
        $P_2$ & $\Q(\sqrt{-19})$  & $-884736$ & $-19$ \\ [0.5ex]
        $P_3$ & $\Q(\sqrt{-1})$  & $1728$ & $-4$ \\ [0.5ex]
        $P_4$ & $\Q(\sqrt{-1})$  & $1728$ & $-4$ \\ [0.5ex]
        $P_5$ & $\Q(\sqrt{-1})$ & $287496$ & $-16$ \\ [0.5ex]
        $P_6$ & $\Q(\sqrt{-1})$  & $287496$ & $-16$  \\ [0.5ex] 
        \bottomrule    
    \end{tabular}
    } \\
    \vspace{-60pt}   
    \hspace{-25pt} 
\begin{tikzpicture}[line cap=round,line join=round,>=triangle 45,x=1.0cm,y=1.0cm]
\clip(-6.3,1.) rectangle (8.6,9.6);
\draw [line width=1.1pt] (-5.,6.)-- (-5.,3.);
\draw [line width=1.1pt] (-5.,6.)-- (-2.,6.);
\draw [line width=1.1pt] (-2.,6.)-- (-2.,3.);
\draw [line width=1.1pt] (-5.,3.)-- (-2.,3.);
\draw [line width=1.1pt] (-5.,6.)-- (-2.,3.);
\draw [line width=1.1pt] (-5.,3.)-- (-2.,6.);
\draw [line width=1.1pt] (0.,6.)-- (3.,6.);
\draw [line width=1.1pt] (0.,3.)-- (3.,3.);
\draw [line width=1.1pt] (0.,6.)-- (0.,3.);
\draw [line width=1.1pt] (3.,6.)-- (3.,3.);
\draw [line width=1.1pt] (0.,6.)-- (3.,3.);
\draw [line width=1.1pt] (0.,3.)-- (3.,6.);
\draw [line width=1.1pt] (5.,6.)-- (8.,6.);
\draw [line width=1.1pt] (5.,6.)-- (5.,3.);
\draw [line width=1.1pt] (5.,3.)-- (8.,3.);
\draw [line width=1.1pt] (8.,6.)-- (8.,3.);
\draw [line width=1.1pt] (5.,6.)-- (8.,3.);
\draw [line width=1.1pt] (5.,3.)-- (8.,6.);
\draw (-6.365736585365862,7.6) node[anchor=north west]{} ;
\draw (1.529365853658537,2.4) node[anchor=north west]{} ;
\begin{scriptsize}
\node [label={[label distance=\dist mm]\NE: {\textcolor{ududff}{$P_1$}}},circle,fill=blue,draw=blue,scale=0.5](A1) at (-5,6) {};
\node [label={[label distance=\distlarger mm]\NW: {\textcolor{ududff}
{$P_1^{\, \sigma}$}}},circle,fill=blue,draw=blue,scale=0.5](A1) at (-2,6) {};
\node [label={[label distance=\dist mm]\SE: {\textcolor{ududff}{$P_2$}}},circle,fill=blue,draw=blue,scale=0.5](A1) at (-5,3) {};
\node [label={[label distance=\distlarger mm]\SW: {\textcolor{ududff}
{$P_2^{\, \sigma}$}}},circle,fill=blue,draw=blue,scale=0.5](A1) at (-2,3) {};
\draw[color=black] (-5.41,4.5) node {$w_{17}$};
\draw[color=black] (-3.55,6.2) node {$w_{85}$};
\draw[color=black] (-1.58,4.5) node {$w_{17}$};
\draw[color=black] (-3.55,2.8) node {$w_{85}$};
\draw[color=black] (-4.51,5.1) node {$w_5$};
\draw[color=black] (-2.5,5.1) node {$w_5$};
\node [label={[label distance=\dist mm]\NE: {\textcolor{ududff}{$P_3$}}},circle,fill=blue,draw=blue,scale=0.5](A1) at (0,6) {};
\node [label={[label distance=\distlarger mm]\NW: {\textcolor{ududff}
{$P_3^{\, \sigma}$}}},circle,fill=blue,draw=blue,scale=0.5](A1) at (3,6) {};
\node [label={[label distance=\dist mm]\SE: {\textcolor{ududff}{$P_4$}}},circle,fill=blue,draw=blue,scale=0.5](A1) at (0,3) {};
\node [label={[label distance=\distlarger mm]\SW: {\textcolor{ududff}
{$P_4^{\, \sigma}$}}},circle,fill=blue,draw=blue,scale=0.5](A1) at (3,3) {};
\draw[color=black] (1.55,6.2) node {$w_{85}$};
\draw[color=black] (1.55,2.8) node {$w_{85}$};
\draw[color=black] (-0.35,4.5) node {$w_{17}$};
\draw[color=black] (3.35,4.5) node {$w_{17}$};
\draw[color=black] (0.5,5.1) node {$w_{5}$};
\draw[color=black] (2.6,5.1) node {$w_{5}$};
\node [label={[label distance=\dist mm]\NE: {\textcolor{ududff}{$P_5$}}},circle,fill=blue,draw=blue,scale=0.5](A1) at (5,6) {};
\node [label={[label distance=\distlarger mm]\NW: {\textcolor{ududff}
{$P_5^{\, \sigma}$}}},circle,fill=blue,draw=blue,scale=0.5](A1) at (8,6) {};
\node [label={[label distance=\dist mm]\SE: {\textcolor{ududff}{$P_6$}}},circle,fill=blue,draw=blue,scale=0.5](A1) at (5,3) {};
\node [label={[label distance=\distlarger mm]\SW: {\textcolor{ududff}
{$P_6^{\, \sigma}$}}},circle,fill=blue,draw=blue,scale=0.5](A1) at (8,3) {};
\draw[color=black] (6.5,6.2) node {$w_{85}$};
\draw[color=black] (4.65,4.5) node {$w_{17}$};
\draw[color=black] (6.5,2.8) node {$w_{85}$};
\draw[color=black] (8.35,4.5) node {$w_{17}$};
\draw[color=black] (5.5,5.1) node {$w_{5}$};
\draw[color=black] (7.5,5.1) node {$w_5$}; 
\end{scriptsize}
\end{tikzpicture}
\end{table}


\begin{table}[!ht]
    \caption{All non-cuspidal quadratic points on \boldmath $X_0(97)$}
    \label{table97}
    \begin{flalign*}
	& \text{Genus: } 7 \\
        & J_0(97)(\Q) \isom \Z^3 \times \Z/8\Z \\
    \end{flalign*} 		
    {\small
    \begin{tabular}{cccc}
	\toprule
	Point & Field   & $j$-invariant  & CM \\ [0.3ex]
	\midrule
	$P_1$ & $\Q(\sqrt{-3})$  & $54000$ & $-12$ \\ [0.5ex]
        $P_2$ & $\Q(\sqrt{-163})$  & $-262537412640768000$ & $-163$ \\ [0.5ex]
        $P_3$ & $\Q(\sqrt{-1})$ & $1728$ & $-4$ \\ [0.5ex]
        $P_4$ & $\Q(\sqrt{-2})$ & $8000$ & $-8$\\ [0.5ex]
        $P_5$ & $\Q(\sqrt{-43})$ & $-884736000$ & $-43$\\ [0.5ex]
        $P_6$ & $\Q(\sqrt{-11})$ & $-32768$ & $-11$ \\ [0.5ex]
        $P_7$ & $\Q(\sqrt{-3})$ & $0$ & $-3$ \\ [0.5ex]
        $P_8$ & $\Q(\sqrt{-1})$ & $287496$ & $-16$ \\ [0.5ex]
        $P_9$ & $\Q(\sqrt{-3})$ & $-12288000$ & $-27$ \\ [0.5ex] 		
	\bottomrule
    \end{tabular}
    } \\
\begin{tikzpicture}[line cap=round,line join=round,>=triangle 45,x=1.0cm,y=1.0cm]
\clip(-1.1472776948271382,-1.6850082903931902) rectangle (4.271247212060318,1.4596713430682824);
\draw [line width=1.1pt] (0.,0.)-- (3.,0.);
\draw (0.56183619272532,-0.45132628034292016) node[anchor=north west] {$i=1, \dotsc, 9$};
\begin{scriptsize}
\draw [fill=ududff] (0.,0.) circle (2.0pt);
\draw[color=ududff] (0.1892111493939868,0.34693854968960747) node {$P_i$};
\draw [fill=ududff] (3.,0.) circle (2.5pt);
\draw[color=ududff] (3.3,0.35) node {$P_i^{\sigma}$};
\draw[color=black] (1.55,0.24) node {$w_{97}$};
\end{scriptsize}
\end{tikzpicture}
\end{table}


\begin{table}[!ht]
    \caption{All non-cuspidal quadratic points on \boldmath $X_0(98)$}
    \label{table98}
    \begin{align*}
	& \text{Genus: } 7 \\		
        & J_0(98)(\Q) \isom \Z/2\Z \times \Z/6\Z \times \Z/42\Z \\
    \end{align*}
    {\small
    \begin{tabular}{cccc}
	\toprule
	Point & Field  & $j$-invariant & CM  \\ [0.3ex]
	\midrule
        $P_1$ & $\Q(\sqrt{-3})$ & $54000$ & $-12$\\ [0.5ex]
        $P_2$ & $\Q(\sqrt{-3})$ & $0$ & $-3$ \\  [0.5ex] 
        \bottomrule      
    \end{tabular}
    } \\
\begin{tikzpicture}[line cap=round,line join=round,>=triangle 45,x=1.0cm,y=1.0cm]
\clip(-6.85658341463415,1.5842829268292604) rectangle (-0.2600175609756133,7.6354634146341365);
\draw [line width=1.1pt] (-5.,6.)-- (-5.,3.);
\draw [line width=1.1pt] (-5.,6.)-- (-2.,6.);
\draw [line width=1.1pt] (-2.,6.)-- (-2.,3.);
\draw [line width=1.1pt] (-5.,3.)-- (-2.,3.);
\draw [line width=1.1pt] (-5.,6.)-- (-2.,3.);
\draw [line width=1.1pt] (-5.,3.)-- (-2.,6.);
\begin{scriptsize}
\node [label={[label distance=\dist mm]\NE: {\textcolor{ududff}{$P_1$}}},circle,fill=blue,draw=blue,scale=0.5](A1) at (-5,6) {};
\node [label={[label distance=\distlarger mm]\NW: {\textcolor{ududff}
{$P_1^{\, \sigma}$}}},circle,fill=blue,draw=blue,scale=0.5](A1) at (-2,6) {};
\node [label={[label distance=\dist mm]\SE: {\textcolor{ududff}{$P_2$}}},circle,fill=blue,draw=blue,scale=0.5](A1) at (-5,3) {};
\node [label={[label distance=\distlarger mm]\SW: {\textcolor{ududff}
{$P_2^{\, \sigma}$}}},circle,fill=blue,draw=blue,scale=0.5](A1) at (-2,3) {};
\draw[color=black] (-5.3,4.55) node {$w_{2}$};
\draw[color=black] (-3.5,6.2) node {$w_{49}$};
\draw[color=black] (-1.7,4.55) node {$w_2$};
\draw[color=black] (-3.5,2.8) node {$w_{49}$};
\draw[color=black] (-4.5,5.1) node {$w_{98}$};
\draw[color=black] (-2.5,5.1) node {$w_{98}$};
\end{scriptsize}
\end{tikzpicture}
\end{table}


\begin{table}[!ht] 
    \caption{All non-cuspidal quadratic points on \boldmath $X_0(100)$}
    \label{table100}
    \begin{flalign*}
	& \text{Genus: } 7 \\
        & J_0(100)(\Q) \isom \Z/3\Z \times  \Z/30\Z \times  \Z/30\Z  \\
    \end{flalign*} 	
    {\small
    \begin{tabular}{cccc}
	\toprule
	Point & Field  & $j$-invariant  & CM \\ [0.3ex]
	\midrule
	$P_1$ & $\Q(\sqrt{-1})$  & $1728$ & $-4$ \\ [0.5ex]
        $P_2$ & $\Q(\sqrt{-1})$  & $287496$ & $-16$ \\ [0.5ex]
        $P_3$ & $\Q(\sqrt{-1})$ & $287496$ & $-16$ \\ [0.5ex] 		
	\bottomrule
    \end{tabular}
    } \\
\begin{tikzpicture}[line cap=round,line join=round,>=triangle 45,x=1.0cm,y=1.0cm]
\clip(-6.084311271347847,2.315162519980797) rectangle (6.873260078590276,7.039910758690617);
\draw [line width=1.1pt] (-5.,6.)-- (-5.,3.);
\draw [line width=1.1pt] (-5.,6.)-- (-2.,6.);
\draw [line width=1.1pt] (-2.,6.)-- (-2.,3.);
\draw [line width=1.1pt] (-5.,3.)-- (-2.,3.);
\draw [line width=1.1pt] (-5.,6.)-- (-2.,3.);
\draw [line width=1.1pt] (-5.,3.)-- (-2.,6.);
\draw [line width=1.1pt] (1.66,4.87) -- (5.63,4.87);
\draw [line width=1.1pt] (1.66,4.72) -- (5.63,4.72);
\draw [shift={(1.1,4.76)},line width=1.1pt]  plot[domain=0.7:5.9,variable=\t]({1.*0.65*cos(\t r)+0.*0.65*sin(\t r)},{0.*0.65*cos(\t r)+1.*0.65*sin(\t r)});
\draw [shift={(6.2,4.76)},line width=1.1pt]  plot[domain=-2.8:2.6,variable=\t]({1.*0.65*cos(\t r)+0.*0.65*sin(\t r)},{0.*0.65*cos(\t r)+1.*0.65*sin(\t r)});
\begin{scriptsize}
\node [label={[label distance=\dist mm]\NE: {\textcolor{ududff}{$P_1$}}},circle,fill=blue,draw=blue,scale=0.5](A1) at (-5,6) {};
\node [label={[label distance=\distlarger mm]\NW: {\textcolor{ududff}
{$P_1^{\, \sigma}$}}},circle,fill=blue,draw=blue,scale=0.5](A1) at (-2,6) {};
\node [label={[label distance=\dist mm]\SE: {\textcolor{ududff}{$P_2$}}},circle,fill=blue,draw=blue,scale=0.5](A1) at (-5,3) {};
\node [label={[label distance=\distlarger mm]\SW: {\textcolor{ududff}
{$P_2^{\, \sigma}$}}},circle,fill=blue,draw=blue,scale=0.5](A1) at (-2,3) {};
\draw[color=black] (-5.4,4.5) node {$w_{100}$};
\draw[color=black] (-3.5,6.2) node {$w_{25}$};
\draw[color=black] (-1.6,4.5) node {$w_{100}$};
\draw[color=black] (-3.5,2.8) node {$w_{25}$};
\draw[color=black] (-4.25,4.94) node {$w_{4}$};
\draw[color=black] (-2.7, 4.9) node {$w_{4}$};
\draw [fill=ududff] (1.64,4.804653658536576) circle (2.5pt);
\draw[color=ududff] (1.85,5.3) node {$P_3$};
\draw [fill=ududff] (5.64,4.8) circle (2.5pt);
\draw[color=ududff] (5.4,5.3) node {$P_3^{\, \sigma}$};
\draw[color=black] (3.64,5.07) node {$w_{100}$};
\draw[color=black] (3.64,4.52) node {$w_{25}$};
\draw[color=black] (0.88,5.02) node {$w_4$};
\draw[color=black] (6.42,5.02) node {$w_4$};
\end{scriptsize}
\end{tikzpicture}
\end{table}


\begin{table}[!ht]
    \caption{All non-cuspidal quadratic points on \boldmath $X_0(103)$}
    \label{table103}
    \begin{flalign*}
	& \text{Genus: } 8 \\
        &  J_0(103)(\Q)\isom \Z^2 \times \Z/17\Z \\
    \end{flalign*} 
    {\small
    \begin{tabular}{cccc}
	\toprule
	Point & Field  & $j$-invariant & CM \\ [0.3ex]
	\midrule
        $P_1$ & $\Q(\sqrt{-3})$  & $54000$ & $-12$ \\ [0.5ex]
        $P_2$ & $\Q(\sqrt{-3})$ & $-12288000$ & $-27$ \\ [0.5ex]
        $P_3$ & $\Q(\sqrt{-3})$  & $0$ & $-3$ \\ [0.5ex]
        $P_4$ & $\Q(\sqrt{-11})$  & $-32768$ & $-11$ \\ [0.5ex]
        $P_5$ & $\Q(\sqrt{-67})$  & $-147197952000$ & $-67$ \\ [0.5ex]
        $P_6$ & $\Q(\sqrt{-43})$  & $-884736000$ & $-43$ \\ 
        $P_7$ & $\Q(\sqrt{2885})$  & 
        \begin{tabular}{@{}l@{}}
            \begin{tiny} $-669908635472124980731701532753920\sqrt{2885} $ \end{tiny} \\
            \begin{tiny} \quad $ +35982263935929364331785036841779200$ \end{tiny} \\ 
        \end{tabular}   & NO \\ [0.5ex] 
        \bottomrule
    \end{tabular}
    }\\
\begin{tikzpicture}[line cap=round,line join=round,>=triangle 45,x=1.0cm,y=1.0cm]
\clip(-1.1472776948271382,-1.6850082903931902) rectangle (4.271247212060318,1.4596713430682824);
\draw [line width=1.1pt] (0.,0.)-- (3.,0.);
\draw (0.56183619272532,-0.45132628034292016) node[anchor=north west] {$i=1, \dotsc, 7$};
\begin{scriptsize}
\draw [fill=ududff] (0.,0.) circle (2.0pt);
\draw[color=ududff] (0.1892111493939868,0.34693854968960747) node {$P_i$};
\draw [fill=ududff] (3.,0.) circle (2.5pt);
\draw[color=ududff] (3.3,0.35) node {$P_i^{\sigma}$};
\draw[color=black] (1.55,0.24) node {$w_{103}$};
\end{scriptsize}
\end{tikzpicture}
 \end{table}
 

\begin{table}[!ht]
    \caption{All non-cuspidal quadratic points on \boldmath $X_0(107)$}
    \label{table07}
    \begin{flalign*}
	& \text{Genus: } 9 \\
        &  J_0(107)(\Q)\isom \Z^2 \times \Z/53\Z \\
    \end{flalign*}
    {\small
    \begin{tabular}{cccc}
	\toprule
	Point & Field & $j$-invariant & CM \\ [0.3ex]
	\midrule
        $P_1$ & $\Q(\sqrt{-7})$  & $-3375$ & $-7$ \\ [0.5ex]
        $P_2$ & $\Q(\sqrt{-7})$  & $16581375$ & $-28$ \\ [0.5ex]
        $P_3$ & $\Q(\sqrt{-2})$ & $8000$ & $-8$ \\ [0.5ex]
        $P_4$ & $\Q(\sqrt{-43})$  & $-884736000$ & $-43$ \\ [0.5ex]
        $P_5$ & $\Q(\sqrt{-67})$ & $-147197952000$ & $-67$\\ [0.5ex] 
        \bottomrule
  \end{tabular}
  } \\
\begin{tikzpicture}[line cap=round,line join=round,>=triangle 45,x=1.0cm,y=1.0cm]
\clip(-1.1472776948271382,-1.6850082903931902) rectangle (4.271247212060318,1.4596713430682824);
\draw [line width=1.1pt] (0.,0.)-- (3.,0.);
\draw (0.56183619272532,-0.45132628034292016) node[anchor=north west] {$i=1, \dotsc, 5$};
\begin{scriptsize}
\draw [fill=ududff] (0.,0.) circle (2.0pt);
\draw[color=ududff] (0.1892111493939868,0.34693854968960747) node {$P_i$};
\draw [fill=ududff] (3.,0.) circle (2.5pt);
\draw[color=ududff] (3.3,0.35) node {$P_i^{\sigma}$};
\draw[color=black] (1.55,0.24) node {$w_{107}$};
\end{scriptsize}
\end{tikzpicture}
\end{table}


\begin{table}[!ht]
    \caption{All non-cuspidal quadratic points on \boldmath $X_0(109)$}
    \label{table109}
    \begin{flalign*}
	& \text{Genus: } 8 \\
        & J_0(109)(\Q) \isom \Z^3 \times \Z/9\Z\\
    \end{flalign*} 
    {\small
    \begin{tabular}{cccc}
	\toprule
	Point & Field  & $j$-invariant  & CM \\ [0.3ex]
	\midrule
        $P_1$ & $\Q(\sqrt{-43})$  & $-884736000$ & $-43$\\ [0.5ex]
        $P_2$ & $\Q(\sqrt{-3})$ & $54000$ & $-12$ \\ [0.5ex]
        $P_3$ & $\Q(\sqrt{-1})$ & $1728$ & $-4$\\ [0.5ex]
        $P_4$ & $\Q(\sqrt{-7})$ & $16581375$ & $-28$\\ [0.5ex]
        $P_5$ & $\Q(\sqrt{-3})$ & $-12288000$ & $-27$\\ [0.5ex]
        $P_6$ & $\Q(\sqrt{-7})$ & $-3375$ & $-7$\\ [0.5ex]
        $P_7$ & $\Q(\sqrt{-3})$ & $0$ & $-3$\\ [0.5ex]
        $P_8$ & $\Q(\sqrt{-1})$ & $287496$ & $-16$ \\	[0.5ex] 	
	\bottomrule
    \end{tabular}
    } \\
\begin{tikzpicture}[line cap=round,line join=round,>=triangle 45,x=1.0cm,y=1.0cm]
\clip(-1.1472776948271382,-1.6850082903931902) rectangle (4.271247212060318,1.4596713430682824);
\draw [line width=1.1pt] (0.,0.)-- (3.,0.);
\draw (0.56183619272532,-0.45132628034292016) node[anchor=north west] {$i=1, \dotsc, 8$};
\begin{scriptsize}
\draw [fill=ududff] (0.,0.) circle (2.0pt);
\draw[color=ududff] (0.1892111493939868,0.34693854968960747) node {$P_i$};
\draw [fill=ududff] (3.,0.) circle (2.5pt);
\draw[color=ududff] (3.3,0.35) node {$P_i^{\sigma}$};
\draw[color=black] (1.55,0.24) node {$w_{109}$};
\end{scriptsize}
\end{tikzpicture}
\end{table}


\begin{table}[!ht]
    \caption{All non-cuspidal quadratic points on \boldmath $X_0(113)$}
    \label{table113}
    \begin{flalign*}
	& \text{Genus: } 9 \\
        & J_0(127)(\Q) \isom \Z^3 \times \Z/28\Z \\
    \end{flalign*}
    {\small
    \begin{tabular}{cccc}
	\toprule
	Point & Field & $j$-invariant & CM  \\ [0.3ex]
	\midrule
        $P_1$ & $\Q(\sqrt{-1})$ & $287496$ & $-16$ \\ [0.5ex]
        $P_2$ & $\Q(\sqrt{-1})$  & $1728$ & $-4$ \\ [0.5ex]
        $P_3$ & $\Q(\sqrt{-7})$  & $16581375$ & $-28$ \\ [0.5ex]
        $P_4$ & $\Q(\sqrt{-7})$  & $-3375$ & $-7$ \\ [0.5ex]
        $P_5$ & $\Q(\sqrt{-11})$  & $-32768$ & $-11$\\ [0.5ex]
        $P_6$ & $\Q(\sqrt{-163})$ & $-262537412640768000$ & $-163$ \\ [0.5ex]
        $P_7$ & $\Q(\sqrt{-2})$  & $8000$ & $-8$ \\ [0.5ex] 
        \bottomrule
    \end{tabular}
    } \\
\begin{tikzpicture}[line cap=round,line join=round,>=triangle 45,x=1.0cm,y=1.0cm]
\clip(-1.1472776948271382,-1.6850082903931902) rectangle (4.271247212060318,1.4596713430682824);
\draw [line width=1.1pt] (0.,0.)-- (3.,0.);
\draw (0.56183619272532,-0.45132628034292016) node[anchor=north west] {$i=1, \dotsc, 7$};
\begin{scriptsize}
\draw [fill=ududff] (0.,0.) circle (2.0pt);
\draw[color=ududff] (0.1892111493939868,0.34693854968960747) node {$P_i$};
\draw [fill=ududff] (3.,0.) circle (2.5pt);
\draw[color=ududff] (3.3,0.35) node {$P_i^{\sigma}$};
\draw[color=black] (1.55,0.24) node {$w_{113}$};
\end{scriptsize}
\end{tikzpicture}
\end{table}


\begin{table}[!ht]
    \caption{All non-cuspidal quadratic points on \boldmath $X_0(121)$}
    \label{table121}
    \begin{flalign*}
	& \text{Genus: 6 } \\
        & J_0(121)(\Q) \isom \Z \times \Z/5\Z \times \Z/5\Z \\
    \end{flalign*}
    {\small
    \begin{tabular}{ccccc}
	\toprule
	Point & Field  & $j$-invariant & CM  \\ [0.3ex]
	\midrule 
        $P_1$ & $\Q(\sqrt{-19})$  & $-884736$ & $-19$ \\ [0.5ex]
        $P_2$ & $\Q(\sqrt{-43})$  & $-884736000$ & $-43$\\  [0.5ex]
        $P_3$ & $\Q(\sqrt{-2})$ & $8000$ & $-8$  \\ [0.5ex]
        $P_4$ & $\Q(\sqrt{-7})$ & $-3375$ & $-7$ \\ [0.5ex] 
        $P_5$ & $\Q(\sqrt{-7})$ & $16581375$ & $-28$ \\ [0.5ex] 
        \bottomrule
    \end{tabular} 
    } \\          
\begin{tikzpicture}[line cap=round,line join=round,>=triangle 45,x=1.0cm,y=1.0cm]
\clip(-1.1472776948271382,-1.6850082903931902) rectangle (4.271247212060318,0.9);
\draw [line width=1.1pt] (0.,0.)-- (3.,0.);
\draw (0.56183619272532,-0.45132628034292016) node[anchor=north west] {$i=1, \dotsc, 5$};
\begin{scriptsize}
\draw [fill=ududff] (0.,0.) circle (2.0pt);
\draw[color=ududff] (0.1892111493939868,0.34693854968960747) node {$P_i$};
\draw [fill=ududff] (3.,0.) circle (2.5pt);
\draw[color=ududff] (3.3,0.35) node {$P_i^{\sigma}$};
\draw[color=black] (1.55,0.24) node {$w_{121}$};
\end{scriptsize}
\end{tikzpicture}
\end{table}    


\begin{table}[!ht]
    \caption{All non-cuspidal quadratic points on \boldmath $X_0(127)$}
    \label{table127}
    \begin{flalign*}
	& \text{Genus: } 10 \\
        &  J_0(127)(\Q) \isom \Z^3 \times \Z/21\Z \\
    \end{flalign*}
    {\small
    \begin{tabular}{cccc}
	\toprule
	Point & Field  & $j$-invariant & CM  \\ [0.3ex]
	\midrule 
        $P_1$ & $\Q(\sqrt{-67})$ & $ -147197952000$ & $-67$ \\ [0.5ex]
        $P_2$ & $\Q(\sqrt{-3})$ & $54000$ & $-12$ \\ [0.5ex]
        $P_3$ & $\Q(\sqrt{-3})$ & $-12288000$ & $-27$ \\ [0.5ex]
        $P_4$ & $\Q(\sqrt{-7})$ & $16581375$ & $-28$ \\ [0.5ex]
        $P_5$ & $\Q(\sqrt{-7})$ & $-3375$ & $-7$\\ [0.5ex]
        $P_6$ & $\Q(\sqrt{-3})$  & $0$ & $-3$ \\ [0.5ex]
        $P_7$ & $\Q(\sqrt{-43})$  & $-884736000$ & $-43$ \\ [0.5ex] 
        \bottomrule
    \end{tabular}
    } \\
\begin{tikzpicture}[line cap=round,line join=round,>=triangle 45,x=1.0cm,y=1.0cm]
\tikzmath{\x1 = 0.35; \y1 =-0.05; \z1=180; \w1=0.2;} 
\small

\clip(-1.1472776948271382,-1.6850082903931902) rectangle (4.271247212060318,1.4596713430682824);
\draw [line width=1.1pt] (0.,0.)-- (3.,0.);
\draw (0.56183619272532,-0.45132628034292016) node[anchor=north west] {$i=1, \dotsc, 7$};
\begin{scriptsize}
\draw [fill=ududff] (0.,0.) circle (2.0pt);
\draw[color=ududff] (0.1892111493939868,0.34693854968960747) node {$P_i$};
\draw [fill=ududff] (3.,0.) circle (2.5pt);
\draw[color=ududff] (3.3,0.35) node {$P_i^{\sigma}$};
\draw[color=black] (1.55,0.24) node {$w_{127}$};
\end{scriptsize}
\end{tikzpicture}
\end{table}


\clearpage
\newpage


\bibliographystyle{siam}
\bibliography{references}

\end{document}